\providecommand{\U}[1]{\protect\rule{.1in}{.1in}}
\newtheorem{theorem}{Theorem}[section]
\theoremstyle{plain}
\newtheorem{corollary}[theorem]{Corollary}
\newtheorem{lemma}[theorem]{Lemma}
\newtheorem{proposition}[theorem]{Proposition}
\newtheorem{remark}[theorem]{Remark}
\numberwithin{equation}{section}
\begin{document}
\title{Generalized Jordan derivations of unital algebras}
\author{Dominik Benkovi\v{c}}
\email{dominik.benkovic@um.si}

\author{Mateja Gra\v{s}i\v{c}}
\email{mateja.grasic@um.si}
\address{University of Maribor, Faculty of Natural Sciences and Mathematics, 2000
Maribor, Slovenia}
\address{Institute of Mathematics, Physics and Mechanics, 1000 Ljubljana, Slovenia }

\subjclass[2020]{ 16W25}
\keywords{Jordan derivation, derivation, Jordan centralizer, centralizer, unital algebra.}

\begin{abstract}

Let $A$ be a unital algebra over a field $F$ with $\operatorname*{char} (F)\neq2$. In this paper we introduce a new concept of a generalized Jordan derivation, covering Jordan centralizers and Jordan derivations, as follows: a linear map $f:A\rightarrow A$ is a generalized Jordan derivation if there exist linear maps $g;h:A\rightarrow A$ such that $f\left(  x\right)  \circ y+x\circ g\left(  y\right)  =h\left(  x\circ y\right)  $ for all $x,y\in A$ (here $x\circ y=xy+yx$). 
Our aim is to give the form of map $f$ in terms of the so called quasi Jordan centralizers and quasi Jordan derivations. In addition, a characterization of such maps is presented.

\end{abstract}
\maketitle

\section{Introduction}

Let $\mathcal{A}$ be a unital (associative) algebra over a field $F$ with $\operatorname*{char}\left(  F\right)  \neq2$. By $x\circ y=xy+yx$ we denote the Jordan product of elements $x, y\in\mathcal{A}$. 

A linear map $d:\mathcal{A}\rightarrow\mathcal{A}$ is a {\it{derivation}} if $d\left(  xy\right)  =d\left(  x\right)  y+xd\left(  y\right)  $ holds for all $x,y\in\mathcal{A}$.  If $d$ is a derivation for Jordan product, meaning $d\left(  x\circ y\right)  =d\left(  x\right) \circ y+x\circ d\left(  y\right)  $ for all $x,y\in\mathcal{A}$, then it is called a {\it Jordan derivation}. Let us denote the set of all derivations of $\mathcal A$ by $\operatorname*{Der}\left(
\mathcal{A}\right)  $ and by  $\operatorname*{JDer}\left(  \mathcal{A} \right)  $ the set of all Jordan derivations of algebra $\mathcal{A}$. Obviously  $\operatorname*{Der}\left( \mathcal{A}\right) \subseteq \operatorname*{JDer}\left(  \mathcal{A} \right) $.  The study of conditions that force a Jordan derivation to be a derivation was initiated in the second half of the 20th century with Jacobson and Rickart and their study of Jordan homomorphisms \cite{JR}, and with Herstein \cite{Her}. I. N. Herstein \cite{Her} proved in 1957 that every Jordan derivation of a prime ring of characteristic not $2$ is a derivation and later this result was extended to semiprime rings and algebras in various directions (see e.g. \cite{Br0, B3, Cus, Sin} and references therein).

Further, let $\operatorname*{Cent}(\mathcal{A})$ denote the set of all \emph{centralizers} of algebra $\mathcal{A}$, i.e. linear maps $f$ satisfying $f\left(  xy\right)  =f\left(  x\right) y=xf\left(  y\right)  $ for all $x,y\in\mathcal{A}$, and let $\operatorname*{JCent}(\mathcal{A})$ be the set of all \emph{Jordan centralizers}, i.e. linear maps satisfying $f\left(  x\circ y\right) =f\left(  x\right)  \circ y$ for all $x,y\in\mathcal{A}$. Clearly, every centralizer is a Jordan centralizer and, analogous to the above mentioned study of derivations, the algebras satisfying $\operatorname*{JCent}(\mathcal{A})=\operatorname*{Cent}(\mathcal{A})$ are of special interest. For example, Zalar in \cite{Z} proved that on semiprime algebras Jordan centralizers and centralizers coincide. \smallskip

In this paper we are interested in a natural generalization of Jordan centralizers and Jordan derivations defined in terms of Jordan product. We call a linear map $f:\mathcal{A}\rightarrow \mathcal{A}$ a \emph{generalized Jordan derivation} if there exist linear maps $g,h:\mathcal{A}\rightarrow\mathcal{A}$ such that
\begin{equation}
f\left(  x\right)  \circ y+x\circ g\left(  y\right)  =h\left(  x\circ
y\right)  \quad\text{for all }x,y\in\mathcal{A}.\label{e_1}%
\end{equation}
The set of all generalized Jordan derivations of algebra $\mathcal{A}$ will be denoted by $\operatorname*{GJDer}\left(  \mathcal{A}\right)  $. Main motivation for the terminology used in the present paper comes from \cite{LL}, where Leger and Luks present a systematic study of generalized derivations of Lie algebras. A generalized derivation of a Lie algebra  $\mathcal{L=}\left(  \mathcal{L},+,\left[  ,\right]  ,\cdot\right)  $ is a linear map  $f:\mathcal{L} \rightarrow\mathcal{L}$ such that there exist linear maps $g,h:\mathcal{A}\rightarrow\mathcal{A}$ satisfying 
$\left[  f\left(  x\right) ,y\right]  +\left[  x,g\left(  y\right)  \right]  =h\left(  \left[ x,y\right]  \right)  $
for all $x,y\in\mathcal{L}$.  Note that in \cite{YB} maps satisfying the special case of (\ref{e_1}), namely $h=f$, on triangular algebras were considered. In \cite{Br}  Bre\v sar defines a linear map  $h:\mathcal{A}\rightarrow\mathcal{A}$ satisfying (\ref{e_1}) to be a Jordan  $\left\{  f,g\right\}  $-derivation. It follows from \cite[Theorem $4.3$]{Br} that in case of a unital semiprime algebra $\mathcal A$ a map $h$ is of the form $h\left(  x\right)  =\lambda x+d\left(  x\right)  $ for all $x\in\mathcal{A}$, where $d$ is a derivation and $\lambda$ is a central element. Therefore we can write $h\in \operatorname*{Cent}(\mathcal{A})+\operatorname*{Der} \left(  \mathcal{A}\right)  $.

Some connections regarding generalized Jordan derivations and Jordan $\{f,g\}$-derivations are presented in the second section. Let us note here that there exist generalized Jordan derivations on the algebra of all upper triangular matrices over a unital commutative ring which can not be presented as Jordan $\{f,g\}$-derivations. Theorem \ref{Prop_PraQ} states, that in case of a unital semiprime algebra $\mathcal A$ equality $\operatorname*{GJDer}\left( \mathcal{A}\right)  =\operatorname*{Cent}(\mathcal{A})+\operatorname*{Der}\left(  \mathcal{A}\right)  $ holds. More generally, Theorem
\ref{Theor1} states that every $f\in\operatorname*{GJDer}\left(  \mathcal{A}%
\right)  $ can be presented as $f=f_{1}+f_{2}$, where $f_{1}$ is a \emph{quasi Jordan centralizer} (i.e. map $f_{1}$ is such that $f_{1}\left( x\right)  \circ y=x\circ f_{1}\left(  y\right)  $ for all $x,y\in\mathcal{A}$)
and $f_{2}$ is a \emph{quasi Jordan derivation} (i.e. map $f_{2}$ satisfyies $f_{2}\left(  x\right)  \circ y+x\circ f_{2}\left(  y\right) =h\left(  x\circ y\right)  $ for all $x,y\in\mathcal{A}$). Denote the set of all quasi Jordan centralizers by $\operatorname*{QJCent}(\mathcal{A})$  and the set of all quasi Jordan derivations of $\mathcal A$ by $\operatorname*{QJDer}\left(  \mathcal{A}\right)  $. Note that $\operatorname*{QJCent}(\mathcal{A})\cap\operatorname*{QJDer}\left( \mathcal{A}\right)  =\operatorname*{JCent}(\mathcal{A})$ and that the following chains of inclusions hold%
\begin{gather*}
\operatorname*{Cent}\left(  \mathcal{A}\right)  \subseteq\operatorname*{JCent}%
\left(  \mathcal{A}\right)  \subseteq\operatorname*{QJCent}\left(
\mathcal{A}\right)  ,\\
\operatorname*{Der}\left(  \mathcal{A}\right)  \subseteq\operatorname*{JDer}%
\left(  \mathcal{A}\right)  \subseteq\operatorname*{JCent}\left(
\mathcal{A}\right)  +\operatorname*{JDer}\left(  \mathcal{A}\right)
\subseteq\operatorname*{QJDer}\left(  \mathcal{A}\right).
\end{gather*}
 In what follows a Jordan centralizer that is not a centralizer will be called a {\it proper } Jordan centralizer. Analogously, a proper quasi Jordan centralizer is a quasi Jordan centralizer not contained in  $\operatorname*{JCent}({\mathcal A})$.

In the third section we present a classification of maps in $\operatorname*{QJCent}\left(  \mathcal{A}\right)  $, see Theorem \ref{Theor2} and Corollary \ref{Cor1}. Matrix algebras and semiprime algebras are basic examples of algebras satisfying $\operatorname*{QJCent}\left(  \mathcal{A}\right)  =\operatorname*{Cent} \left(  \mathcal{A}\right)  $. So, there do not exist proper Jordan centralizers and not even proper quasi Jordan centralizers of these algebras. Algebras generated by idempotents and triangular algebras are examples of algebras not having proper Jordan centralizers (there can exist proper quasy Jordan centralizers of such algebras). Note, when $\operatorname*{JCent}\left( \mathcal{A}\right)  =\operatorname*{QJCent}\left(  \mathcal{A}\right)  $ holds for an algebra $\mathcal A$, then  $\operatorname*{GJDer}\left(  \mathcal{A}\right) =\operatorname*{QJDer}\left(  \mathcal{A}\right)  $ and so for the description of generalized Jordan derivations it is  enough to know the form of quasi Jordan derivations. \smallskip

In the fourth section the equality $\operatorname*{QJDer}\left( \mathcal{A}\right)  =\operatorname*{JCent}\left(  \mathcal{A}\right) +\operatorname*{JDer}\left(  \mathcal{A}\right)  $ is observed. This equality holds for matrix algebras  (Corollary \ref{Cor2}), algebras generated by idempotents (Proposition \ref{Prop_idemQ}), triangular algebras (Corollary \ref{Prop_TriQ}) and semiprime algebras. Lemma \ref{Lem} gives an easy criterion for  $f\in\operatorname*{QJDer}\left( \mathcal{A}\right)  $ to satisfy $f\in\operatorname*{JCent}\left(  \mathcal{A} \right)  +\operatorname*{JDer}\left(  \mathcal{A}\right)  $. \smallskip



Let us conclude this introductory part with pointing out that our definition of a generalized Jordan derivation clearly differs from the definition of a generalized Jordan derivation presented in some other studies, for example  \cite{HQ,JL,V}. In these studies a generalized Jordan derivation is a map $f:{\mathcal A}\rightarrow {\mathcal A}$ satisfying  $f(x^2)=f(x)x+xd(x)$ for all $x \in {\mathcal A}$, where $d:{\mathcal A}\rightarrow {\mathcal A}$ is a Jordan derivation. 

\section{Preliminary results}

In the following theorem a basic fact about generalized Jordan derivations is given. Namely, to give the form of generalized Jordan derivations it is enough to know the forms of quasi Jordan derivations and quasi Jordan centralizers. 

\begin{theorem}
\label{Theor1} Let $\mathcal{A}$ be a unital algebra over a field $F$ and
$\operatorname*{char}\left(  F\right)  \neq2$. Then the following equalities hold

$\left( a\right)  $ $\operatorname*{GJDer}\left(  \mathcal{A}\right)
=\operatorname*{QJCent}(\mathcal{A})+\operatorname*{QJDer}\left(
\mathcal{A}\right)  $ and

$\left( b\right)  $ $\operatorname*{JCent}(\mathcal{A}%
)=\operatorname*{QJCent}(\mathcal{A})\cap\operatorname*{QJDer}\left(
\mathcal{A}\right)  $.
\end{theorem}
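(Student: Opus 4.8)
The plan is to establish each equality by proving two inclusions, and in both cases the two structural features driving the argument are the commutativity of the Jordan product, $x\circ y=y\circ x$, and the presence of the unit $1$ (with $x\circ 1=2x$).

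For part $(a)$, the inclusion $\operatorname{QJCent}(\mathcal{A})+\operatorname{QJDer}(\mathcal{A})\subseteq\operatorname{GJDer}(\mathcal{A})$ is the routine one. Given $f=f_{1}+f_{2}$ with $f_{1}\in\operatorname{QJCent}(\mathcal{A})$ and $f_{2}\in\operatorname{QJDer}(\mathcal{A})$ (the latter witnessed by some $h$), I would verify that $f$ is a generalized Jordan derivation with the choices $g:=f_{2}-f_{1}$ and the same $h$: expanding $f(x)\circ y+x\circ g(y)$ and using the quasi Jordan centralizer identity $f_{1}(x)\circ y=x\circ f_{1}(y)$ makes the $f_{1}$-contributions cancel, leaving exactly $f_{2}(x)\circ y+x\circ f_{2}(y)=h(x\circ y)$.

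The reverse inclusion $\operatorname{GJDer}(\mathcal{A})\subseteq\operatorname{QJCent}(\mathcal{A})+\operatorname{QJDer}(\mathcal{A})$ is the step I expect to be the main obstacle, since it requires producing the decomposition. The key idea is to symmetrize the defining identity: starting from $f(x)\circ y+x\circ g(y)=h(x\circ y)$ and interchanging $x$ and $y$, commutativity of $\circ$ yields the companion identity $x\circ f(y)+g(x)\circ y=h(x\circ y)$. One then splits $f$ according to its behaviour under this symmetry by setting $f_{1}:=\tfrac{1}{2}(f-g)$ and $f_{2}:=\tfrac{1}{2}(f+g)$, which is legitimate because $\operatorname{char}(F)\neq2$. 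Subtracting the two identities gives $f_{1}(x)\circ y=x\circ f_{1}(y)$, so $f_{1}\in\operatorname{QJCent}(\mathcal{A})$, while adding them gives $f_{2}(x)\circ y+x\circ f_{2}(y)=h(x\circ y)$, so $f_{2}\in\operatorname{QJDer}(\mathcal{A})$. Since $f=f_{1}+f_{2}$, this settles $(a)$.

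For part $(b)$, the inclusion $\operatorname{JCent}(\mathcal{A})\subseteq\operatorname{QJCent}(\mathcal{A})\cap\operatorname{QJDer}(\mathcal{A})$ follows from commutativity of $\circ$ alone: if $f(x\circ y)=f(x)\circ y$, then $f(x)\circ y=f(y)\circ x=x\circ f(y)$ gives the quasi Jordan centralizer property, and $f(x)\circ y+x\circ f(y)=2f(x\circ y)$ exhibits $f$ as a quasi Jordan derivation with $h=2f$. For the opposite inclusion I would take $f$ lying in both sets, witnessed by $h$, and substitute the centralizer identity $x\circ f(y)=f(x)\circ y$ into the derivation identity to obtain $2\,f(x)\circ y=h(x\circ y)$. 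Here the unit enters decisively: setting $y=1$ and using $x\circ 1=2x$ and $f(x)\circ 1=2f(x)$ forces $h=2f$, whence $f(x)\circ y=f(x\circ y)$ and $f\in\operatorname{JCent}(\mathcal{A})$. The only subtlety throughout is the repeated, careful use of commutativity to convert left multiplication into right multiplication and the systematic invocation of $\operatorname{char}(F)\neq2$ when halving or cancelling factors of $2$.
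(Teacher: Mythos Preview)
Your argument is correct and follows essentially the same route as the paper: the decomposition via $\tfrac{1}{2}(f\pm g)$ for part~(a) and the substitution $y=1$ after combining the two identities for part~(b) are exactly the moves the authors make. The only differences are cosmetic---you swap the roles of the indices $f_{1},f_{2}$ relative to the paper, and you spell out the ``easy'' inclusions that the paper leaves implicit.
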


\begin{proof}
Let $f\in\operatorname*{GDer}\left(  \mathcal{A}\right)  $. Therefore  (\ref{e_1}) holds for some linear maps  $g,h$ on $\mathcal A$. Since Jordan product is commutative it follows that 
\begin{align*}
f\left(  x\right)  \circ y+x\circ g\left(  y\right)   &  =h\left(  x\circ
y\right)  ,\\
g\left(  x\right)  \circ y+x\circ f\left(  y\right)   &  =h\left(  x\circ
y\right)
\end{align*}
for all $x,y\in\mathcal{A}$. Denote $f_{1}=\frac{1}{2}\left(  f+g\right)  $
and $f_{2}=\frac{1}{2}\left(  f-g\right)  $. From the above identities it follows that 
\begin{align*}
f_{1}\left(  x\right)  \circ y+x\circ f_{1}\left(  y\right)   &  =h\left(
x\circ y\right)  ,\\
f_{2}\left(  x\right)  \circ y-x\circ f_{2}\left(  y\right)   &  =0
\end{align*}
for all $x,y\in\mathcal{A}$. Therefore $f_{1}\in\operatorname*{QJDer}\left(
\mathcal{A}\right)  $ and $f_{2}\in\operatorname*{QJCent}(\mathcal{A})$. Since
$f=f_{1}+f_{2}$ this proves $\left(  a\right)  $.

To prove $\left(  b\right)  $  let $f\in\operatorname*{QJCent}(\mathcal{A})\cap\operatorname*{QJDer}%
\left(  \mathcal{A}\right)  $. Then there exists a linear map $h$ on $\mathcal{A}$ such that
\begin{align*}
f\left(  x\right)  \circ y+x\circ f\left(  y\right)   &  =h\left(  x\circ
y\right)  \quad\text{and}\\
f\left(  x\right)  \circ y-x\circ f\left(  y\right)   &  =0
\end{align*}
for all $x,y\in\mathcal{A}$. Adding up these identites we obtain
$2f\left(  x\right)  \circ y=h\left(  x\circ y\right)  $ and further, substituting $y=1$,
we get $h\left(  x\right)  =2f\left(  x\right)  $. Therefore $f\left(  x\circ
y\right)  =f\left(  x\right)  \circ y$ for all $x,y\in\mathcal{A}$ and thus
$f\in\operatorname*{JCent}(\mathcal{A})$.
\end{proof}

Let linear maps $f,g,h:\mathcal{A}\rightarrow\mathcal{A}$ be such that (\ref{e_1}) holds. Using the definition given in \cite{Br} this is equal to saying that $h$ is a Jordan $\left\{  f,g\right\}  $-derivation.
Knowing the form of generalized Jordan derivations of some algebra $\mathcal A$ the form of Jordan $\{f,g\}$-derivations can be given. Namely, for $y=1$ in (\ref{e_1}) we get
\[
h\left(  x\right)  =f\left(  x\right)  +\beta\circ x\quad\text{for all }%
x\in\mathcal{A},
\]
where $2\beta=g\left(  1\right)  $. The next simple example of a map on algebra $\mathcal{A}=T_{2}\left(
A\right)  $ illustrates that the opposite does not hold. \bigskip

\noindent\textbf{Example 1.} Let $\mathcal{A}=T_{2}\left(  A\right)  $ be an
upper triangular matrix algebra over a unital commutative algebra $A$. Using the notation $e_{ij}$ for the matrix units 
every $x\in\mathcal{A}$ can be presented as $x=ae_{11}+me_{12}+be_{22}$ for some $a,m,b\in A$. Define a map $f:\mathcal{A}\rightarrow\mathcal{A}$ as $f\left(  x\right)  =e_{12}\circ x=\left(  a+b\right)  e_{12}$ for all $x\in\mathcal{A}$. Direct calculations show that
\begin{align*}
f\left(  x\right)  \circ y &  =\left(  a+b\right)  e_{12}\circ y=\left(
a+b\right)  \left(  a^{\prime}+b^{\prime}\right)  e_{12},\\
x\circ f\left(  y\right)   &  =x\circ\left(  a^{\prime}+b^{\prime}\right)
e_{12}=\left(  a+b\right)  \left(  a^{\prime}+b^{\prime}\right)  e_{12},\\
f\left(  x\circ y\right)   &  =2\left(  aa^{\prime}+bb^{\prime}\right)  e_{12}%
\end{align*}
for all $x=ae_{11}+me_{12}+be_{22}$ and $y=a^{\prime}e_{11}+m^{\prime}%
e_{12}+b^{\prime}e_{22}.$ Therefore $f$ is a quasi Jordan centralizer and is not a Jordan centralizer.

Next we show that $f$ in not a Jordan $\{g,h\}$-derivation for any choice of linear maps $g,h$. Assume contrary, that $f\left(  x\circ y\right)  =g\left( x\right)  \circ y+x\circ h\left(  y\right)  $ for all $x,y\in\mathcal{A}$.
Writing $f_{1}=\frac{1}{2}\left(  g+h\right)  $ this equality transforms into $f\left(  x\circ
y\right)  =f_{1}\left(  x\right)  \circ y+x\circ f_{1}\left(  y\right)  $ for all $x,y\in\mathcal{A}$. Substituting $x=y=1$ we get $4f_{1}\left( 1\right)  =2f\left(  1\right)  =4e_{12}$ and $f_{1}\left(  1\right)  =e_{12}$.
Substitution $x=e_{11}$ and $y=e_{22}$ gives
\begin{align*}
f\left(  e_{11}\circ e_{22}\right)   &  =f_{1}\left(  e_{11}\right)  \circ
e_{22}+e_{11}\circ f_{1}\left(  e_{22}\right) \\
0  &  =f_{1}\left(  e_{11}\right)  e_{22}+e_{22}f_{1}\left(  e_{11}\right)
+e_{11}f_{1}\left(  e_{22}\right)  +f_{1}\left(  e_{22}\right)  e_{11}.
\end{align*}
Multiplying this equation from the left side by $e_{11}$ and from the right side by $e_{22}$ we get a contradiction
\[
0=e_{11}f_{1}\left(  e_{11}\right)  e_{22}+e_{11}f_{1}\left(  e_{22}\right)
e_{22}=e_{11}f_{1}\left(  1\right)  e_{22}=e_{12}
\]
and the claim is thereby proved. \bigskip

More generally, observing the above example it is not hard to see that for $n\geq 2$ the map  $f:T_{n}\left(  A\right)  \rightarrow T_{n}\left(  A\right)  $ defined as $f\left(  x\right)  =e_{1n}\circ x$ for all $x\in T_{n}\left(
A\right)  $ is a quasi Jordan centralizer and is not a Jordan centralizer. Therefore the following remark can be stated.
\begin{remark}\label{Rem}
Let $\mathcal{A}=T_{n}\left(  A\right)  $, $n\geq 2$, be an upper triangular
matrix algebra over a unital commutative algebra $A$. Then
$\operatorname*{JCent}\left(  \mathcal{A}\right)  \neq\operatorname*{QJCent}%
\left(  \mathcal{A}\right)  $.
\end{remark}

The algebra of upper triangular matrices is not a semiprime algebra. Recall, an algebra $\mathcal A$ is a semiprime algebra if $a\mathcal{A}a=\left\{  0\right\}  $, where $a\in\mathcal{A}$, implies $a=0$. If algebra $\mathcal A$ is such that $a\mathcal{A}b=\left\{  0\right\}  $, where $a,b\in\mathcal{A}$, implies $a=0$ or $b=0$ it is called a prime algebra. In case of a semiprime algebra $\mathcal{A}$ Bre\v sar's result \cite[Theorem $4.3$]{Br} can be applied and the next statement can be obtained.

\begin{theorem}
\label{Prop_PraQ} Let $\mathcal{A}$ be a unital semiprime algebra over a field $F$ with
$\operatorname*{char}\left(  F\right)  \neq2$. Then $\operatorname*{GJDer}%
\left(  \mathcal{A}\right)  =\operatorname*{Cent}\left(  \mathcal{A}\right)
+\operatorname*{Der}\left(  \mathcal{A}\right)  $.
\end{theorem}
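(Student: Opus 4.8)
The plan is to prove the two inclusions separately. The inclusion $\operatorname*{Cent}(\mathcal{A})+\operatorname*{Der}(\mathcal{A})\subseteq\operatorname*{GJDer}(\mathcal{A})$ carries no subtlety: by the inclusion chains recorded in the introduction we have $\operatorname*{Cent}(\mathcal{A})\subseteq\operatorname*{QJCent}(\mathcal{A})$ and $\operatorname*{Der}(\mathcal{A})\subseteq\operatorname*{QJDer}(\mathcal{A})$, so Theorem \ref{Theor1}$(a)$ immediately gives $\operatorname*{Cent}(\mathcal{A})+\operatorname*{Der}(\mathcal{A})\subseteq\operatorname*{QJCent}(\mathcal{A})+\operatorname*{QJDer}(\mathcal{A})=\operatorname*{GJDer}(\mathcal{A})$.

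For the reverse inclusion I would take $f\in\operatorname*{GJDer}(\mathcal{A})$ and apply Theorem \ref{Theor1}$(a)$ to write $f=f_{1}+f_{2}$ with $f_{2}\in\operatorname*{QJCent}(\mathcal{A})$ and $f_{1}\in\operatorname*{QJDer}(\mathcal{A})$ sharing the same map $h$, so that $f_{1}(x)\circ y+x\circ f_{1}(y)=h(x\circ y)$ for all $x,y$. Thus $h$ is a Jordan $\{f_{1},f_{1}\}$-derivation and Bre\v{s}ar's Theorem \cite[Theorem $4.3$]{Br} applies, yielding $h(x)=\lambda x+d(x)$ for a derivation $d$ and a central $\lambda$, with $d(1)=0$ and $h(1)=\lambda$. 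To recover $f_{1}$ itself I would substitute $y=1$ to get $2f_{1}(x)+x\circ f_{1}(1)=2h(x)$, and then $x=y=1$ to get $4f_{1}(1)=2h(1)=2\lambda$, whence $f_{1}(1)=\tfrac{\lambda}{2}$ is central. Feeding this back gives $x\circ f_{1}(1)=\lambda x$, and therefore $f_{1}(x)=\tfrac{\lambda}{2}x+d(x)\in\operatorname*{Cent}(\mathcal{A})+\operatorname*{Der}(\mathcal{A})$. In particular this already proves $\operatorname*{QJDer}(\mathcal{A})=\operatorname*{Cent}(\mathcal{A})+\operatorname*{Der}(\mathcal{A})$ for unital semiprime $\mathcal{A}$, using nothing beyond Bre\v{s}ar's theorem.

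It then remains to dispose of the quasi Jordan centralizer $f_{2}$, that is, to show $\operatorname*{QJCent}(\mathcal{A})=\operatorname*{Cent}(\mathcal{A})$ in the semiprime case. Setting $c=f_{2}(1)$ and putting $y=1$ in $f_{2}(x)\circ y=x\circ f_{2}(y)$ gives $2f_{2}(x)=x\circ c$; re-inserting this into the defining identity and simplifying collapses everything to $[c,[x,y]]=0$ for all $x,y$. Hence the inner derivation $\delta=[c,\,\cdot\,]$ annihilates all commutators, so that $[\delta(x),y]=[\delta(y),x]$; taking $y=c$ (and using $\delta(c)=0$) forces $\delta^{2}=0$. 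Since $\operatorname*{char}(F)\neq2$ makes $\mathcal{A}$ be $2$-torsion free, the short semiprimeness argument $0=\delta^{2}(xy)=2\delta(x)\delta(y)$, followed by $\delta(x)\mathcal{A}\delta(x)=0$, gives $\delta=0$. Thus $c$ is central and $f_{2}(x)=cx\in\operatorname*{Cent}(\mathcal{A})$. Combining, $f=f_{1}+f_{2}\in\operatorname*{Cent}(\mathcal{A})+\operatorname*{Der}(\mathcal{A})$, which closes the reverse inclusion.

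I expect the centrality of $c=f_{2}(1)$ — equivalently the identity $\operatorname*{QJCent}(\mathcal{A})=\operatorname*{Cent}(\mathcal{A})$ for semiprime algebras — to be the main obstacle. Bre\v{s}ar's theorem as quoted controls only $h$ (and hence, after the substitutions above, the symmetric part $f_{1}$), and says nothing about the quasi Jordan centralizer summand; this is precisely the step where semiprimeness and $2$-torsion freeness must be invoked beyond what the treatment of $h$ requires. If one prefers to avoid reproving it, this step can instead be cited from the classification of $\operatorname*{QJCent}(\mathcal{A})$ established in Section 3.
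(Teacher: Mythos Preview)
Your argument is correct, but the route differs from the paper's. The paper does \emph{not} first decompose $f$ via Theorem~\ref{Theor1}. Instead it applies Bre\v{s}ar's theorem directly to the original triple $(f,g,h)$ and uses the \emph{associative} conclusion of that theorem, namely that $h$ is an $\{f,g\}$-derivation, so that $f(x)y+xg(y)=h(xy)=g(x)y+xf(y)$. Putting $y=1$ and $x=1$ in this associative identity yields $g(1)\in Z(\mathcal{A})$ in one line, and then $f(x)=h(x)-g(1)x=f(1)x+d(x)$ drops out immediately. Your approach, by contrast, first splits $f=f_{1}+f_{2}$ into a quasi Jordan derivation and a quasi Jordan centralizer, invokes only the \emph{form} $h(x)=\lambda x+d(x)$ from Bre\v{s}ar (not the associative identity) to handle $f_{1}$, and then runs a separate semiprimeness argument (nilpotent inner derivation $\delta=[c,\cdot\,]$ with $\delta^{2}=0$, hence $\delta(x)\mathcal{A}\delta(x)=0$) to force $f_{2}(1)$ central. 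The paper's proof is shorter and avoids the auxiliary $\delta^{2}=0$ computation; your proof is more modular and, as you note, independently establishes $\operatorname*{QJDer}(\mathcal{A})=\operatorname*{Cent}(\mathcal{A})+\operatorname*{Der}(\mathcal{A})$ and $\operatorname*{QJCent}(\mathcal{A})=\operatorname*{Cent}(\mathcal{A})$ for unital semiprime $\mathcal{A}$ --- the latter is exactly Proposition~\ref{Prop_Pra}, which the paper instead derives \emph{from} the present theorem rather than en route to it.
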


\begin{proof}
Let $f\in\operatorname*{GJDer}\left(  \mathcal{A}\right)  $. Then there exist linear maps $g,h:\mathcal{A}\rightarrow\mathcal{A}$ such that
$f\left(  x\right)  \circ y+x\circ g\left(  y\right)  =h\left(  x\circ
y\right)  $ for all $x,y\in\mathcal{A}$. The map $h$ is a Jordan $\left\{
f,g\right\}$-derivation and by \cite[Theorem $4.3$]{Br} we know that $h$ is a $\left\{
f,g\right\}  $-derivation. Therefore
\begin{equation}
f\left(  x\right)  y+xg\left(  y\right)  =h\left(  xy\right)  =g\left(
x\right)  y+xf\left(  y\right)  \label{e_5}%
\end{equation}
for all $x,y\in\mathcal{A}$ and $h\left(  x\right)  =h\left(  1\right)
x+d\left(  x\right)  $, where $h\left(  1\right)  \in Z\left(
\mathcal{A}\right)  $ and $d:\mathcal{A}\rightarrow\mathcal{A}$ is a derivation.
Substituting $x=y=1$ in (\ref{e_5}) gives $h\left(  1\right)  =f\left(
1\right)  +g\left(  1\right)  $. Next, again using (\ref{e_5}), we have
\[
f\left(  x\right)  +xg\left(  1\right)  =h\left(  x\right)  =g\left(
1\right)  x+f\left(  x\right),
\]
thus $g\left(  1\right)  \in Z\left(  \mathcal{A}\right)  $ and therefore also $f\left(  1\right)  \in Z\left(
\mathcal{A}\right)  $. Finally, we derive
\[
f\left(  x\right)  =h\left(  x\right)  -g\left(  1\right)  x=\left(  h\left(
1\right)  -g\left(  1\right)  \right)  x+d\left(  x\right)  =f\left(
1\right)  x+d\left(  x\right)
\]
for all $x\in\mathcal{A}$ and so $f\in\operatorname*{Cent}\left(  \mathcal{A}\right)
+\operatorname*{Der}\left(  \mathcal{A}\right)  $.
\end{proof}

\section{Characterization of maps in $\operatorname*{QJCent}\left( \mathcal{A}\right)  $}

As pointed out already in the introductory section, for every unital algebra $\mathcal{A}$ over a field $F$ the inclusions $\operatorname*{Cent}\left(  \mathcal{A}\right)
\subseteq\operatorname*{JCent}\left(  \mathcal{A}\right)  \subseteq
\operatorname*{QJCent}\left(  \mathcal{A}\right)  $ hold. In this section we give a characterization of maps from these sets and provide examples of algebras for which the above inclusions are actually equalities, i.e. $\operatorname*{JCent}\left(  \mathcal{A}\right)  =\operatorname*{Cent}\left( \mathcal{A}\right)  $, $\operatorname*{QJCent}\left(  \mathcal{A}\right) =\operatorname*{JCent}\left(  \mathcal{A}\right)  $ and $\operatorname*{QJCent} \left(  \mathcal{A}\right)  =\operatorname*{Cent}\left(  \mathcal{A}\right) $. Let $\left[  x,y\right]  =xy-yx$ denote the Lie product of elements $x,y\in\mathcal{A}$.\smallskip

Recall, a map $f$ is a centralizer of a unital algebra $\mathcal A$ if and only if it is of the form $f\left(  x\right)  =ax$ where $a\in Z\left(  {\mathcal A}\right)  $.  The center of algebra $\mathcal A$, $Z\left(  \mathcal{A}\right)  =\left\{  a\in\mathcal{A}|\left[ a,\mathcal{A}\right]  =\left\{  0\right\}  \right\}  $, is therefore in a bijective corespondence with the set  $\operatorname*{Cent}\left(  \mathcal{A}\right)  $. In the characterization of the maps in $\operatorname*{QJCent}\left(  \mathcal{A} \right)  $ similar role will be played by the sets
\begin{align*}
Z_{J}\left(  \mathcal{A}\right)   &  =\left\{  a\in\mathcal{A}|\left[  \left[
a,x\right]  ,y\right]  =0\text{ for all }x,y\in\mathcal{A}\right\}
\;\text{and}\\
Z_{Q}\left(  \mathcal{A}\right)   &  =\left\{  a\in\mathcal{A}|\left[
a,\left[  x,y\right]  \right]  =0\text{ for all }x,y\in\mathcal{A}\right\}  .
\end{align*}
By the Jacobi identity, $\left[  \left[  a,x\right]
,y\right]  +\left[ \left[  y,a\right],x  \right]  +\left[ \left[
x,y\right], a  \right]  =0$ for all $a,x,y\in\mathcal{A}$, the following holds
\[
F\cdot1\subseteq Z\left(  \mathcal{A}\right)  \subseteq Z_{J}\left(
\mathcal{A}\right)  \subseteq Z_{Q}\left(  \mathcal{A}\right)  \subseteq
\mathcal{A}.
\]

\begin{theorem}
\label{Theor2} Let $\mathcal{A}$ be a unital algebra over a field $F$,
$\operatorname*{char}\left(  F\right)  \neq2$. For a linear map $f:\mathcal{A} \rightarrow\mathcal{A}$ let us denote $f\left(  1\right)  =2\alpha$. Then 

$\left(  a\right)  $ $f\in\operatorname*{QJCent}\left(  \mathcal{A}\right)  $
if and only if $f(x)=\alpha \circ x$, where $\alpha\in Z_{Q}\left(  \mathcal{A}\right)  ,$

$\left(  b\right)  $ $f\in\operatorname*{JCent}\left(  \mathcal{A}\right)  $
if and only if  $f(x)=\alpha \circ x$, where $\alpha\in Z_{J}\left(  \mathcal{A}\right)  .$
\end{theorem}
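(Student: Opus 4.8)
The plan is to base both parts on two elementary product identities that hold for \emph{any} map of the shape $f(x)=\alpha\circ x$, and then simply read off the required central conditions. A direct expansion of the Jordan and Lie products shows that, for $f(x)=\alpha\circ x$,
\[
f(x)\circ y-x\circ f(y)=[\alpha,[x,y]]
\qquad\text{and}\qquad
f(x\circ y)-f(x)\circ y=[[\alpha,y],x]
\]
for all $x,y\in\mathcal{A}$. The first identity governs membership in $\operatorname{QJCent}(\mathcal{A})$ and the second governs membership in $\operatorname{JCent}(\mathcal{A})$; the sets $Z_{Q}(\mathcal{A})$ and $Z_{J}(\mathcal{A})$ are defined precisely so that the right-hand sides vanish identically exactly when $\alpha$ lies in the corresponding set. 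Thus the whole proof reduces to (i) establishing these two identities, and (ii) showing that an arbitrary $f$ in either class must have the form $\alpha\circ x$.

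For part $(a)$, I would first prove the reduction. Assuming $f\in\operatorname{QJCent}(\mathcal{A})$, so $f(x)\circ y=x\circ f(y)$ for all $x,y$, substitution of $y=1$ gives $2f(x)=x\circ f(1)=2(\alpha\circ x)$, whence $f(x)=\alpha\circ x$ since $\operatorname{char}(F)\neq2$ and $2\alpha=f(1)$. The first identity then applies: the quasi Jordan centralizer condition makes its left-hand side vanish, so $[\alpha,[x,y]]=0$ for all $x,y$, i.e. $\alpha\in Z_{Q}(\mathcal{A})$. Conversely, if $f(x)=\alpha\circ x$ with $\alpha\in Z_{Q}(\mathcal{A})$, the same identity immediately yields $f(x)\circ y=x\circ f(y)$, so $f\in\operatorname{QJCent}(\mathcal{A})$.

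For part $(b)$, I would exploit the inclusion $\operatorname{JCent}(\mathcal{A})\subseteq\operatorname{QJCent}(\mathcal{A})$, which follows from commutativity of the Jordan product: if $f(x\circ y)=f(x)\circ y$ then $f(x)\circ y=f(x\circ y)=f(y\circ x)=f(y)\circ x=x\circ f(y)$. Hence any $f\in\operatorname{JCent}(\mathcal{A})$ already has the form $f(x)=\alpha\circ x$ by part $(a)$. Applying the second identity, the Jordan centralizer condition forces $[[\alpha,y],x]=0$ for all $x,y$, i.e. $\alpha\in Z_{J}(\mathcal{A})$ after relabelling the free variables in the definition; conversely, $\alpha\in Z_{J}(\mathcal{A})$ makes the right-hand side of the second identity vanish, giving $f\in\operatorname{JCent}(\mathcal{A})$.

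The only genuine work is verifying the two displayed identities; everything else is substitution. I expect the main (though still routine) obstacle to be the careful bookkeeping in expanding $(\alpha\circ x)\circ y$, $x\circ(\alpha\circ y)$ and $\alpha\circ(x\circ y)$ into their four monomial terms and collapsing the differences into the commutator forms $[\alpha,[x,y]]$ and $[[\alpha,y],x]$; a sign or ordering slip there would derail the identification with $Z_{Q}(\mathcal{A})$ and $Z_{J}(\mathcal{A})$. No deeper structural input about $\mathcal{A}$ is required, reflecting the fact that these sets were tailored to capture exactly these defects.
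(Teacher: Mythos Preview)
Your proposal is correct and follows essentially the same route as the paper: reduce to $f(x)=\alpha\circ x$ by substituting $y=1$, then invoke the two identities $(\alpha\circ x)\circ y-x\circ(\alpha\circ y)=[\alpha,[x,y]]$ and $\alpha\circ(x\circ y)-(\alpha\circ x)\circ y=[[\alpha,y],x]$ to match the defining conditions of $Z_Q(\mathcal{A})$ and $Z_J(\mathcal{A})$. The only cosmetic difference is that you make the inclusion $\operatorname{JCent}(\mathcal{A})\subseteq\operatorname{QJCent}(\mathcal{A})$ explicit in part~(b), whereas the paper simply writes $f(x\circ y)=\alpha\circ(x\circ y)$ without comment.
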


\begin{proof}
Let $f\in\operatorname*{QJCent}\left(  \mathcal{A}\right)  $. For $y=1$ in  $f\left(  x\right)  \circ y=x\circ f\left(  y\right)  $ we get $f\left(  x\right)  =\alpha\circ x$ for all $x\in\mathcal{A}$. Therefore
\[
f\left(  x\right)  \circ y-x\circ f\left(  y\right)  =\left(  \alpha\circ
x\right)  \circ y-x\circ\left(  \alpha\circ y\right)  =0
\]
for all $x,y\in\mathcal{A}$. By a known identity $\left( \alpha\circ x\right)  \circ y-x\circ\left(  \alpha\circ y\right)  =\left[ \alpha,\left[  x,y\right]  \right]  $ this implies $\alpha\in Z_{Q}\left( \mathcal{A}\right)  $. For the opposite direction of $(a)$ note that for every $\alpha\in Z_{Q}\left( \mathcal{A}\right)  $ a linear map $f$ defined as $f\left(  x\right)  =\alpha\circ x$ for all $x\in\mathcal{A}$ is contained in  $\operatorname*{QJCent}\left(  \mathcal{A}\right) $.

Next, let $f\in\operatorname*{JCent}\left(  \mathcal{A}\right)  $. Then $f\left(  x\circ y\right)  =f\left(  x\right)  \circ y$ and so
\begin{align*}
f\left(  x\circ y\right)  -f\left(  x\right)  \circ y  &  =\alpha\circ\left(
x\circ y\right)  -\left(  \alpha\circ x\right)  \circ y\\
0  &  =\left[  \left[  \alpha,y\right]  ,x\right]
\end{align*}
holds for all $x,y\in\mathcal{A}$. Therefore $\alpha\in Z_{J}\left(  \mathcal{A} \right)  $. Contrary, for $\alpha\in Z_{J}\left( \mathcal{A}\right)  $ a linear map $f\left(  x\right)  =\alpha\circ x$, $x\in\mathcal{A}$, is a Jordan centralizer.
\end{proof}

A direct corollary of the theorem can be stated.

\begin{corollary}
\label{Cor1} Let $\mathcal{A}$ be a unital algebra over a field $F$,
$\operatorname*{char}\left(  F\right)  \neq2$. Then the following propositions hold

$\left( a \right)  $ If $Z_{J}\left(  \mathcal{A}\right)  =Z\left(
\mathcal{A}\right)  $, then $\operatorname*{JCent}\left(  \mathcal{A}%
\right)  =\operatorname*{Cent}\left(  \mathcal{A}\right)  $.

$\left( b\right)  $ If $Z_{Q}\left(  \mathcal{A}\right)  =Z\left(
\mathcal{A}\right)  $, then $\operatorname*{QJCent}\left(  \mathcal{A}%
\right)  =\operatorname*{Cent}\left(  \mathcal{A}\right)  $.

$\left( c\right)  $ If $Z_{Q}\left(  \mathcal{A}\right)
=Z_{J}\left(  \mathcal{A}\right)  $, then $\operatorname*{QJCent}\left(
\mathcal{A}\right)  =\operatorname*{JCent}\left(  \mathcal{A}\right)  $.
\end{corollary}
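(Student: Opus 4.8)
The plan is to read all three statements directly off the parametrization supplied by Theorem \ref{Theor2}, combined with the description of centralizers recalled just before that theorem. The unifying observation is that a map in any of the three classes has the single common shape $f(x)=\alpha\circ x$, and membership in a given class depends only on which of the nested sets $Z(\mathcal{A})\subseteq Z_{J}(\mathcal{A})\subseteq Z_{Q}(\mathcal{A})$ the element $\alpha$ belongs to. So the corollary should reduce to transporting an equality of these ``centers'' across the assignment $\alpha\mapsto(x\mapsto\alpha\circ x)$.

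First I would record the centralizer case in exactly this language, so that all three classes are described by the same formula. A centralizer has the form $f(x)=ax$ with $a\in Z(\mathcal{A})$; since $\operatorname{char}(F)\neq2$ and $a$ is central, setting $\alpha=\tfrac{1}{2}a$ gives $ax=2\alpha x=\alpha\circ x$ with $\alpha\in Z(\mathcal{A})$, and conversely $\alpha\circ x=2\alpha x$ with $2\alpha\in Z(\mathcal{A})$ whenever $\alpha\in Z(\mathcal{A})$. Hence $f\in\operatorname*{Cent}(\mathcal{A})$ if and only if $f(x)=\alpha\circ x$ for some $\alpha\in Z(\mathcal{A})$, which matches the form appearing in Theorem \ref{Theor2} but with $Z(\mathcal{A})$ in place of $Z_{J}(\mathcal{A})$ or $Z_{Q}(\mathcal{A})$. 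This is the only step where $\operatorname{char}(F)\neq2$ is genuinely used.

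With this in hand the three parts are immediate substitutions. The map $\alpha\mapsto(x\mapsto\alpha\circ x)$ sends $Z(\mathcal{A})$, $Z_{J}(\mathcal{A})$, $Z_{Q}(\mathcal{A})$ onto $\operatorname*{Cent}(\mathcal{A})$, $\operatorname*{JCent}(\mathcal{A})$, $\operatorname*{QJCent}(\mathcal{A})$ respectively (by Theorem \ref{Theor2}(b), (a) and the previous paragraph), so equal domains force equal images. For $(a)$ the hypothesis $Z_{J}(\mathcal{A})=Z(\mathcal{A})$ turns the characterization of $\operatorname*{JCent}(\mathcal{A})$ into that of $\operatorname*{Cent}(\mathcal{A})$; for $(b)$ the hypothesis $Z_{Q}(\mathcal{A})=Z(\mathcal{A})$ does the same for $\operatorname*{QJCent}(\mathcal{A})$; and for $(c)$ the hypothesis $Z_{Q}(\mathcal{A})=Z_{J}(\mathcal{A})$ identifies the condition $\alpha\in Z_{Q}(\mathcal{A})$ of Theorem \ref{Theor2}(a) with the condition $\alpha\in Z_{J}(\mathcal{A})$ of Theorem \ref{Theor2}(b). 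Since the inclusions $\operatorname*{Cent}(\mathcal{A})\subseteq\operatorname*{JCent}(\mathcal{A})\subseteq\operatorname*{QJCent}(\mathcal{A})$ already hold unconditionally, one may equivalently say that in each part only the reverse inclusion requires the hypothesis.

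I do not expect any real obstacle: once Theorem \ref{Theor2} is available, the content of the corollary is entirely bookkeeping, and the only point needing a moment's attention is the reconciliation of the two descriptions of a centralizer in the first step. Everything else is a direct rewriting of the hypotheses inside the equivalences of Theorem \ref{Theor2}.
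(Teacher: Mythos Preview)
Your proposal is correct and follows exactly the route the paper intends: the corollary is stated there as an immediate consequence of Theorem~\ref{Theor2}, with no further argument given. Your only addition is the explicit (and harmless) reconciliation of the centralizer description $f(x)=ax$, $a\in Z(\mathcal{A})$, with the form $f(x)=\alpha\circ x$, which is indeed the one small point worth spelling out.
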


\begin{remark}
Last corollary and Theorem \ref{Theor1} implie: if $Z_{Q}\left(  \mathcal{A}\right)  =Z_{J}\left(  \mathcal{A}\right)
$, then $\operatorname*{GJDer}\left(  \mathcal{A}\right) =\operatorname*{QJDer}\left(  \mathcal{A}\right)  $.
\end{remark}
Note, that all algebras in the next subsections are algebras over a field $F$ with $\operatorname*{char} (F)\neq2$ even if this is not mentioned explicitly. 

\subsection{Examples of algebras satisfying $\operatorname*{QJCent}\left( \mathcal{A}\right)  =\operatorname*{Cent}\left(  \mathcal{A}\right)  $}$\ $

Matrix algebras and semiprime algebras are basic classes of algebras on which there are no proper Jordan centralizers nor proper quasi Jordan centralizers.

\begin{proposition}
Let $A$ be a  unital algebra,  and $\mathcal{A}=M_{n}\left(  A\right)  $, $n\geq2$, the matrix algebra. Then $\operatorname*{QJCent}\left(  \mathcal{A} \right)  =\operatorname*{Cent}\left(  \mathcal{A}\right)  $.  
\end{proposition}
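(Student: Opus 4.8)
The plan is to reduce everything to the characterization already in hand. By Corollary \ref{Cor1}$(b)$ it suffices to establish that $Z_{Q}\left(\mathcal{A}\right)=Z\left(\mathcal{A}\right)$, and since the chain before Theorem \ref{Theor2} gives $Z\left(\mathcal{A}\right)\subseteq Z_{Q}\left(\mathcal{A}\right)$ for free, the entire content is the reverse inclusion $Z_{Q}\left(\mathcal{A}\right)\subseteq Z\left(\mathcal{A}\right)$. So I must show that any $z\in M_{n}\left(A\right)$ with $\left[z,\left[x,y\right]\right]=0$ for all $x,y$ is already central, i.e. of the form $cI$ with $c\in Z\left(A\right)$, using that $Z\left(M_{n}\left(A\right)\right)=Z\left(A\right)\cdot I$.

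The engine of the argument is that in a matrix algebra the off-diagonal matrices are commutators: for $i\neq j$ and any $a\in A$ one has $ae_{ij}=\left[ae_{ii},e_{ij}\right]$, since $ae_{ii}\cdot e_{ij}=ae_{ij}$ while $e_{ij}\cdot ae_{ii}=0$. Hence any $z\in Z_{Q}\left(\mathcal{A}\right)$ must satisfy $\left[z,ae_{ij}\right]=0$ for all $a\in A$ and all $i\neq j$, and this is the only information I will use. First I would feed in the case $a=1$: writing $z=\sum_{k,l}z_{kl}e_{kl}$ and comparing the single nonzero column of $ze_{ij}$ with the single nonzero row of $e_{ij}z$ forces, for every pair $i\neq j$, that the off-diagonal entries $z_{kl}$ (with $k\neq l$) vanish and that the diagonal entries all agree. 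This is the standard computation that an element commuting with every matrix unit $e_{ij}$, $i\neq j$, is scalar, and it uses $n\geq 2$ only to guarantee that for each $i$ some $j\neq i$ exists. Thus $z=cI$ for a single $c\in A$.

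It remains to pin down $c$, and here the general coefficients enter. A direct computation gives $\left[cI,ae_{ij}\right]=\left(ca-ac\right)e_{ij}=\left[c,a\right]e_{ij}$, and since $ae_{ij}$ is a commutator this must vanish, so $\left[c,a\right]=0$ for all $a\in A$, i.e. $c\in Z\left(A\right)$. Therefore $z=cI$ with $c\in Z\left(A\right)$, which is precisely an element of $Z\left(\mathcal{A}\right)$; this yields $Z_{Q}\left(\mathcal{A}\right)\subseteq Z\left(\mathcal{A}\right)$, and the proposition follows from Corollary \ref{Cor1}$(b)$.

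All the steps are elementary, so I do not expect a genuine obstacle; the only place demanding care is the index bookkeeping in the scalar step. The single conceptual point worth isolating is that the off-diagonal matrices $ae_{ij}$ are themselves commutators, which simultaneously drives the ``$z$ is scalar'' conclusion (via $a=1$) and the ``$c$ is central'' conclusion (via arbitrary $a\in A$); separating these two uses of the same family of commutators is what makes the argument come out cleanly.
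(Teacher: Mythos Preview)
Your proof is correct and follows essentially the same route as the paper: both reduce to $Z_{Q}(\mathcal{A})=Z(\mathcal{A})$ via Corollary~\ref{Cor1}, realize the off-diagonal matrices as commutators $ae_{ij}=[ae_{ii},e_{ij}]$, use $a=1$ to force $z$ to be a scalar matrix $cI$, and then use general $a$ to force $c\in Z(A)$. The only cosmetic difference is that the paper writes out the computation $[\mathfrak{a},e_{ij}]=\sum_{k}a_{ki}e_{kj}-\sum_{k}a_{jk}e_{ik}$ explicitly and extracts the constraints entry by entry, whereas you appeal to the standard fact about elements commuting with all matrix units.
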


\begin{proof}
Let $\left\{  e_{ij}|i,j=1,2,...,n\right\}  $ be the set of matrix units and let $1$ denote the identity matrix in $\mathcal{A}$. Let a matrix $\mathfrak{a}%
=\sum_{i,j=1}^{n}a_{ij}e_{ij}$ satisfy $\left[
\mathfrak{a},\left[  x,y\right]  \right]  =0$ for all $x,y\in\mathcal{A}$. In particular, for every $i\neq j$ we have
\begin{align*}
0 &  =\left[  \mathfrak{a},\left[  e_{ii},e_{ij}\right]  \right]
=\mathfrak{a}e_{ij}-e_{ij}\mathfrak{a}\\
&  \mathfrak{=}\sum_{k=1}^{n}a_{ki}e_{kj}-\sum_{k=1}^{n}a_{jk}e_{ik}.
\end{align*}
Therefore $a_{ij}=0$ and $a_{ii}=a_{jj}$ for all $i\neq j$. It follows that  $\mathfrak{a}=a\cdot1$ for some $a\in A$ and it remains to prove that
$a\in Z\left(  A\right)  $. This is true since for every $b\in A$ we have
\[
0=\left[  \mathfrak{a},\left[  be_{11},e_{12}\right]  \right]  =\left[
\mathfrak{a,}be_{12}\right]  =\left(  ab-ba\right)  e_{12}.
\]
So $\mathfrak{a}=a\cdot1\in Z\left(  A\right)  \cdot1=Z\left(  \mathcal{A}\right)  $ and we have proved that $Z_{Q}\left(
\mathcal{A}\right)  =Z\left(  \mathcal{A}\right)  $. The conclusion of the proposition follows using Corollary \ref{Cor1}.
\end{proof}

Further, it can be shown that an element $a$ from a semiprime algebra $\mathcal{A}$ satisfying $\left[  a,\left[  \mathcal{A} ,\mathcal{A}\right]  \right]  =\left\{  0\right\}  $  is contained in $ Z\left( \mathcal{A}\right)  $. Therefore $Z_{Q}\left( \mathcal{A}\right)  =Z\left(  \mathcal{A}\right)  $ holds in this case and the following proposition holds (it is also an easy consequence of Theorem \ref{Prop_PraQ}).

\begin{proposition}
\label{Prop_Pra} Let $\mathcal A$ be a semiprime unital algebra. Then $\operatorname*{QJCent}\left(  \mathcal{A}\right)  =\operatorname*{Cent} \left(  \mathcal{A}\right)  $.
\end{proposition}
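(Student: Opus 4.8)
The plan is to reduce the statement to the single inclusion $Z_Q(\mathcal{A})\subseteq Z(\mathcal{A})$ and then to invoke Corollary \ref{Cor1}. Indeed, by Corollary \ref{Cor1}(b) it suffices to establish $Z_Q(\mathcal{A})=Z(\mathcal{A})$, and since the chain $Z(\mathcal{A})\subseteq Z_Q(\mathcal{A})$ holds for every unital algebra, the whole problem collapses to the ring-theoretic fact quoted just before the proposition: an element $a\in\mathcal{A}$ with $[a,[x,y]]=0$ for all $x,y\in\mathcal{A}$ must commute with everything, i.e. $[a,z]=0$ for all $z$. Semiprimeness will be the only property of $\mathcal{A}$ actually used.

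Starting from $[a,[x,y]]=0$, first I would exploit the identity $[x,yx]=[x,y]x$ together with the Leibniz rule $[a,uv]=[a,u]v+u[a,v]$ for the inner derivation $[a,-]$. Replacing $y$ by $yx$ gives $0=[a,[x,y]x]=[a,[x,y]]x+[x,y][a,x]=[x,y][a,x]$, so that $[x,y][a,x]=0$ for all $x,y$. The coupling of the two occurrences of $x$ is the obstacle here, and I would remove it by linearization: substituting $x\mapsto x+z$ and cancelling the two terms that vanish by the previous relation yields $[x,y][a,z]+[z,y][a,x]=0$.

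The decisive specialization is then $x=a$, which annihilates the second summand (since $[a,a]=0$) and leaves $[a,y][a,z]=0$ for all $y,z$. To convert this into a form amenable to semiprimeness, I would replace $y$ by $zw$ for arbitrary $w$, expand via $[a,zw]=[a,z]w+z[a,w]$, and discard the term $z[a,w][a,z]$ using the relation just obtained; what remains is $[a,z]\,w\,[a,z]=0$, that is $[a,z]\,\mathcal{A}\,[a,z]=\{0\}$. Semiprimeness now forces $[a,z]=0$ for every $z$, hence $a\in Z(\mathcal{A})$ and $Z_Q(\mathcal{A})=Z(\mathcal{A})$, which completes the argument through Corollary \ref{Cor1}.

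I expect the main obstacle to lie in the bookkeeping of choosing the right substitutions, in particular the combination ($y\mapsto yx$, then linearizing in $x$, then setting $x=a$, then $y\mapsto zw$) that isolates a square expression $[a,z]\,\mathcal{A}\,[a,z]$ on which semiprimeness can finally be brought to bear; everything else reduces to routine manipulation of commutators via the Leibniz rule.
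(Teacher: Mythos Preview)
Your proposal is correct and follows essentially the same approach as the paper: reduce to the inclusion $Z_Q(\mathcal{A})\subseteq Z(\mathcal{A})$ and then invoke Corollary~\ref{Cor1}(b). The paper merely asserts the ring-theoretic fact that $[a,[\mathcal{A},\mathcal{A}]]=\{0\}$ forces $a\in Z(\mathcal{A})$ in a semiprime algebra, whereas you supply the full commutator computation; the paper also remarks that the proposition is an easy consequence of Theorem~\ref{Prop_PraQ}, which is an alternative route you do not take.
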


Let us point out two important examples of operator algebras that are semiprime. Denote by $\mathcal{B}(X)$ the algebra of all bounded linear operators on $X$, where  $X$ is a Banach space over the field of real or complex numbers. Recall, $\mathcal{A}$ is a standard operator algebra if $\mathcal{A}$ is a unital subalgebra of $\mathcal{B}(X)$ containing all finite rank operators. Every standard operator algebra is prime and as such also semiprime. 
Next, a von Neumann algebra $\mathcal{A}$ is a unital $C^{\ast} $-subalgebra of $\mathcal{B}(H)$, where $H$ is a complex Hilbert space, which is closed in the strong operator topology. Any von Neumann algebra is semiprime. 


\subsection{Examples of algebras satisfying  $\operatorname*{JCent}\left(
\mathcal{A}\right)  =\operatorname*{Cent}\left(  \mathcal{A}\right)  $}$\ $

As basic examples of algebras on which there are no proper Jordan centralizers we present algebras generated by idempotents and triangular algebras. Note, since these algebras are not necessarily semiprime, there can exist proper quasi Jordan centralizers of them (see Remark \ref{Rem})). 
Let $R\left(  \mathcal{A}\right)  $ denote the subalgebra of algebra $\mathcal A$ generated by all idempotents in $\mathcal A$.  

\begin{proposition}
\label{Prop_idem} Let $\mathcal{A}=R\left(  \mathcal{A}\right)  $. Then
$\operatorname*{JCent}\left(  \mathcal{A}\right)  =\operatorname*{Cent}\left( \mathcal{A}\right)  $.
\end{proposition}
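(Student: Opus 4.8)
The plan is to reduce the statement to an identity about the Jordan center and then verify that identity using the idempotent generators. By Corollary \ref{Cor1}$(a)$ it suffices to prove that $Z_{J}\left(\mathcal{A}\right)=Z\left(\mathcal{A}\right)$; since the inclusion $Z\left(\mathcal{A}\right)\subseteq Z_{J}\left(\mathcal{A}\right)$ always holds, only the reverse inclusion needs work. So I would fix $a\in Z_{J}\left(\mathcal{A}\right)$ and unwind the defining condition $\left[\left[a,x\right],y\right]=0$ for all $x,y\in\mathcal{A}$, reading it as the statement that $\left[a,x\right]\in Z\left(\mathcal{A}\right)$ for every $x\in\mathcal{A}$.

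Next I would show that $a$ commutes with every idempotent. Fix an idempotent $e$ and set $c=\left[a,e\right]$, which by the previous step lies in $Z\left(\mathcal{A}\right)$. The key is a pair of ``corner'' identities: using only $e^{2}=e$ one checks directly that $e\left[a,e\right]e=0$, and applying the same computation to the complementary idempotent $1-e$ (noting that $\left[a,e\right]=-\left[a,1-e\right]$ since $\left[a,1\right]=0$) gives $(1-e)\left[a,e\right](1-e)=0$. Now I exploit the centrality of $c$: since $c$ commutes with $e$, the first identity gives $ce=ece=0$, while the second gives $c(1-e)=(1-e)c(1-e)=0$, that is $c=ce$. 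Combining these yields $c=0$, so $\left[a,e\right]=0$.

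Finally, the set $\left\{x\in\mathcal{A}:\left[a,x\right]=0\right\}$ is a subalgebra of $\mathcal{A}$, and by the previous paragraph it contains every idempotent. Since $\mathcal{A}=R\left(\mathcal{A}\right)$ is generated by its idempotents, this subalgebra must be all of $\mathcal{A}$, whence $a\in Z\left(\mathcal{A}\right)$. This proves $Z_{J}\left(\mathcal{A}\right)=Z\left(\mathcal{A}\right)$, and Corollary \ref{Cor1}$(a)$ then delivers $\operatorname*{JCent}\left(\mathcal{A}\right)=\operatorname*{Cent}\left(\mathcal{A}\right)$.

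I expect the main obstacle to be the middle step, namely the passage from ``$\left[a,e\right]$ is central'' to ``$\left[a,e\right]=0$''. The corner identities are a routine but essential algebraic trick, and the actual content lies in recognizing that centrality lets one slide $c$ past the idempotents, so that the two vanishing corners force $c$ itself to vanish. Everything else, the reduction through the corollary and the generation argument, is essentially formal.
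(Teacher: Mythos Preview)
Your proof is correct and follows essentially the same route as the paper's: reduce via Corollary~\ref{Cor1}$(a)$ to $Z_{J}(\mathcal{A})=Z(\mathcal{A})$, show that any $a\in Z_{J}(\mathcal{A})$ commutes with every idempotent, and then invoke generation by idempotents. The only cosmetic difference is in the middle computation: the paper substitutes $x=e$, $y=e^{\perp}$ directly into $[[a,x],y]=0$ to obtain $eae^{\perp}=e^{\perp}ae=0$ and hence $ea=eae=ae$, whereas you first record that $c=[a,e]$ is central and then combine this with the corner identities $ece=(1-e)c(1-e)=0$ to force $c=0$; both arguments are equally short and amount to the same Peirce-decomposition trick.
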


\begin{proof}
In view of Corollary \ref{Cor1} it suffices to prove that $Z_{J}\left( \mathcal{A}\right) =Z\left(  \mathcal{A}\right)  $. Let $e=e^{2}\in \mathcal{A}$ be a nontrivial idempotent   and denote $e^{\perp}=1-e$. Note that $ee^{\perp}=0=e^{\perp}e$. Let $a\in\mathcal{A}$ be such that $\left[  \left[  a,x\right]  ,y\right]  =0$ for all $x,y\in\mathcal{A}$. In particular
\[
0=[\left[  a,e\right]  , e^{\perp}]=[ae-ea,e^{\perp}]=-eae^{\perp}-e^{\perp
}ae.
\]
Multiplying this identity by $e$ and $e^{\perp}$ successively, we get $eae^{\perp}=0=e^{\perp}ae$.  Therefore $a=(e+e^{\perp})a(e+e^{\perp})=eae+e^{\perp}ae^{\perp}$ and further 
\[
ea=eae=ae,
\]
showing that $a$ commutes with every idempotent in $\mathcal A$. Since algebra $\mathcal A$ is generated by idempotents it follows that $ax=xa$ for all $x\in\mathcal{A}$. So $Z_{J}\left( \mathcal{A}\right)  =Z\left(  \mathcal{A}\right)  $.
\end{proof}

Next, let us turn our attention to some special algebras containing nontrivial idempotents. Assume that $\mathcal{A}$ has an idempotent $e\neq0,1$. Then $\mathcal{A}$ can be represented in the so called Peirce decomposition
\begin{equation}
\mathcal{A}=e\mathcal{A}e+e\mathcal{A}e^{\perp}+e^{\perp}\mathcal{A}%
e+e^{\perp}\mathcal{A}e^{\perp}, \label{Lie0_1}%
\end{equation}
where $e\mathcal{A}e$ and $e^{\perp}\mathcal{A}e^{\perp}$ are subalgebras with units $e$ and $e^{\perp}$, respectively, $e\mathcal{A}e^{\perp}$ is an $(e\mathcal{A}e,e^{\perp}\mathcal{A}e^{\perp})$-bimodule and $e^{\perp }\mathcal{A}e$ is an $(e^{\perp}\mathcal{A}e^{\perp},e\mathcal{A}e)$-bimodule. 
Let us assume that $\mathcal{A}$ satisfies the following conditions
\begin{equation}
\begin{aligned}\label{Lie0_2}
exe\cdot e\mathcal{A}e^{\perp}  &  =\{0\}=e^{\perp}\mathcal{A}e\cdot
exe\quad\text{implies\quad}exe=0,\\
e\mathcal{A}e^{\perp}\cdot e^{\perp}xe^{\perp}  &  =\{0\}=e^{\perp}xe^{\perp
}\cdot e^{\perp}\mathcal{A}e\quad\text{implies\quad}e^{\perp}xe^{\perp
}=0
\end{aligned}
\end{equation}
for all $x\in\mathcal{A}$.  Basic examples of unital algebras with nontrivial idempotents having the property (\ref{Lie0_2}) are triangular algebras (satisfying $e^{\perp}\mathcal{A}e=\{0\}$), matrix algebras, and prime
(hence in particular simple) algebras with nontrivial idempotents.

\begin{proposition}
\label{Prop_e_idem}Let $\mathcal{A}$ be a unital algebra with a nontrivial idempotent $e$ satisfying (\ref{Lie0_2}). Then $\operatorname*{JCent} \left(  \mathcal{A}\right)  =\operatorname*{Cent}\left( \mathcal{A}\right)  $.
\end{proposition}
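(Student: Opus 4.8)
The plan is to verify the hypothesis of Corollary~\ref{Cor1}(a), namely $Z_{J}(\mathcal{A})=Z(\mathcal{A})$; since the reverse inclusion $Z(\mathcal{A})\subseteq Z_{J}(\mathcal{A})$ always holds, it suffices to take $a\in Z_{J}(\mathcal{A})$ and show $a\in Z(\mathcal{A})$. The defining condition $[[a,x],y]=0$ says precisely that $[a,x]\in Z(\mathcal{A})$ for every $x$, so the goal becomes $[a,x]=0$, which I will check separately on each of the four summands of $x$ in the Peirce decomposition (\ref{Lie0_1}).

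First I would pin down the Peirce shape of $a$. Writing $a$ in (\ref{Lie0_1}) and computing directly gives $[a,e]=e^{\perp}ae-eae^{\perp}$; applying $[\,\cdot\,,e]$ once more (legitimate since $[a,e]\in Z(\mathcal{A})$) produces $e^{\perp}ae+eae^{\perp}=0$. As the two summands lie in the distinct components $e^{\perp}\mathcal{A}e$ and $e\mathcal{A}e^{\perp}$ of the direct sum (\ref{Lie0_1}), each vanishes, so $a=a_{1}+a_{2}$ with $a_{1}=eae$ and $a_{2}=e^{\perp}ae^{\perp}$. Next, for $m\in e\mathcal{A}e^{\perp}$ one computes $[a,m]=a_{1}m-ma_{2}$, which again lies in a single off-diagonal component and is central, hence $0$; this gives the linking relation $a_{1}m=ma_{2}$, and symmetrically $a_{2}n=na_{1}$ for all $n\in e^{\perp}\mathcal{A}e$. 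In particular $[a,x]=0$ already for every off-diagonal $x$.

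It remains to treat $x\in e\mathcal{A}e$ and $x\in e^{\perp}\mathcal{A}e^{\perp}$, where only $a_{1}$ respectively $a_{2}$ contributes. Fixing $u=eye$ and feeding the products $um\in e\mathcal{A}e^{\perp}$ and $nu\in e^{\perp}\mathcal{A}e$ back into the two linking relations, I would obtain $[a_{1},u]\,m=0$ for all $m\in e\mathcal{A}e^{\perp}$ and $n\,[a_{1},u]=0$ for all $n\in e^{\perp}\mathcal{A}e$; that is, $b:=[a_{1},u]\in e\mathcal{A}e$ satisfies $b\cdot e\mathcal{A}e^{\perp}=\{0\}=e^{\perp}\mathcal{A}e\cdot b$. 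The first line of (\ref{Lie0_2}) then forces $b=0$, i.e. $[a,eye]=0$. The mirror-image argument, using the second line of (\ref{Lie0_2}), yields $[a,e^{\perp}ye^{\perp}]=0$. Combining the four cases and using linearity of $[a,\cdot]$ over (\ref{Lie0_1}) gives $a\in Z(\mathcal{A})$, which completes the reduction.

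The main obstacle, and the only place where the hypothesis (\ref{Lie0_2}) is genuinely used, is the passage from the diagonal linking relations to the vanishing of $[a_{1},u]$ and $[a_{2},e^{\perp}ye^{\perp}]$. Pure Peirce and commutator bookkeeping kills the off-diagonal commutators for free, but it does not force the diagonal blocks $a_{1},a_{2}$ to be central in their corner algebras: the relations $a_{1}m=ma_{2}$ and $a_{2}n=na_{1}$ only say that $[a_{1},u]$ annihilates the off-diagonal corners on one side each. Conditions (\ref{Lie0_2}) are exactly the nondegeneracy statements that upgrade this one-sided two-fold annihilation to $[a_{1},u]=0$, and I expect the bookkeeping that produces the two annihilation identities to be the most delicate computational point.
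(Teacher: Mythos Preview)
Your proof is correct and follows the same overall strategy as the paper: establish $Z_{J}(\mathcal{A})=Z(\mathcal{A})$ by Peirce bookkeeping and then invoke Corollary~\ref{Cor1}(a). The organization differs slightly: the paper first passes through the Jacobi identity to obtain $[\mathfrak{a},[x,y]]=0$ and then uses $e\mathcal{A}e^{\perp}=[e,e\mathcal{A}e^{\perp}]$ to get $[\mathfrak{a},e\mathcal{A}e^{\perp}]=\{0\}$ in one line, whereas you first pin down the diagonal form $a=a_{1}+a_{2}$ and deduce the same vanishing from the fact that a central element of $e\mathcal{A}e^{\perp}$ must be zero. For the diagonal corners the two arguments are formally identical---your ``linking relation'' manipulation unfolds to the same derivation identity $[\mathfrak{a},am]=[\mathfrak{a},a]m$ the paper uses. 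Your explicit diagonalization of $a$ has the minor advantage that $[a_{1},u]\in e\mathcal{A}e$ is then manifest, so condition~(\ref{Lie0_2}) applies on the nose; the paper leaves this implicit.
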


\begin{proof}
Let $e\in {\mathcal A}$ be a nontrivial idempotent and decompose $\mathcal{A}$ as (\ref{Lie0_1}). Let an element $\mathfrak{a}\in\mathcal{A}$ satisfy $\left[  \left[  \mathfrak{a} ,x\right]  ,y\right]  =0$ for all $x,y\in\mathcal{A}$. Then $\left[ \mathfrak{a,}\left[  x,y\right]  \right]  =0$ holds for all $x,y\in\mathcal{A}$ and, using $\left[  e,e\mathcal{A}e^{\perp}\right]  =e\mathcal{A}e^{\perp}$ and $\left[ e^{\perp}\mathcal{A}e,e\right]  =e^{\perp}\mathcal{A}e$, it follows that $\left[ \mathfrak{a,}e\mathcal{A}e^{\perp}\right]  =\left\{  0\right\}  $ and $\left[ \mathfrak{a,}e^{\perp}\mathcal{A}e\right]  =\left\{  0\right\}  $. Thus, for arbitrary $a=eae\in e\mathcal{A}e$, $m=eme^{\perp}\in e\mathcal{A}e^{\perp}$
and $n=e^{\perp}ne\in e^{\perp}\mathcal{A}e$ we have
\begin{align*}
0 &  =\left[  \mathfrak{a,}am\right]  =\left[  \mathfrak{a,}a\right]
m+a\left[  \mathfrak{a,}m\right]  =\left[  \mathfrak{a,}a\right]  m,\\
0 &  =\left[  \mathfrak{a,}na\right]  =\left[  \mathfrak{a,}n\right]
a+n\left[  \mathfrak{a,}a\right]  =n\left[  \mathfrak{a,}a\right]  .
\end{align*}
Therefore $\left[  \mathfrak{a,}a\right]  \cdot e\mathcal{A}e^{\perp }=\{0\}=e^{\perp}\mathcal{A}e\cdot\left[  \mathfrak{a,}a\right]  $ holds for all $a=eae\in e\mathcal{A}e$ and  by (\ref{Lie0_2}) it follows that $\left[
\mathfrak{a,}e\mathcal{A}e\right]  =\left\{  0\right\}  $. Similarly we can prove that $\left[  \mathfrak{a,}e^{\perp}\mathcal{A}e^{\perp }\right]  =\left\{  0\right\}  $.
Using the obtained relations we derive
\[
\lbrack\mathfrak{a,}\mathcal{A}]=[\mathfrak{a,}e\mathcal{A}e+e\mathcal{A}%
e^{\perp}+e^{\perp}\mathcal{A}e+e^{\perp}\mathcal{A}e^{\perp}]=\left\{
0\right\},
\]
proving  $Z_{J}\left( \mathcal{A}\right)  =Z\left(  \mathcal{A}\right)  $. 


\end{proof}

According to the last proposition and the paragraph preceding it, the next corollary can be stated.
\begin{corollary}
\label{Prop_Tri}Let $\mathcal{A}$ be a triangular algebra. Then
$\operatorname*{JCent}\left(  \mathcal{A}\right)  =\operatorname*{Cent}\left(
\mathcal{A}\right)  $.
\end{corollary}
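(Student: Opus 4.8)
The plan is to realize a triangular algebra as a particular instance of the situation treated in Proposition \ref{Prop_e_idem} and then to invoke that proposition directly; the only work is to check that the canonical idempotent of a triangular algebra satisfies condition (\ref{Lie0_2}).

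First I would recall that, up to isomorphism, a triangular algebra $\mathcal{A}$ has the form
\[
\mathcal{A}=\begin{pmatrix} A & M \\ 0 & B \end{pmatrix},
\]
where $A$ and $B$ are unital algebras and $M$ is an $(A,B)$-bimodule that is faithful as a left $A$-module and as a right $B$-module. I would take $e$ to be the idempotent corresponding to $1_{A}$, so that $e^{\perp}$ corresponds to $1_{B}$. This is a nontrivial idempotent, and the Peirce decomposition (\ref{Lie0_1}) identifies $e\mathcal{A}e\cong A$, $e\mathcal{A}e^{\perp}\cong M$, and $e^{\perp}\mathcal{A}e^{\perp}\cong B$, while crucially $e^{\perp}\mathcal{A}e=\{0\}$.

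Next I would verify the two implications in (\ref{Lie0_2}). Since $e^{\perp}\mathcal{A}e=\{0\}$, the hypotheses $e^{\perp}\mathcal{A}e\cdot exe=\{0\}$ and $e^{\perp}xe^{\perp}\cdot e^{\perp}\mathcal{A}e=\{0\}$ hold automatically, so the two implications collapse to
\[
exe\cdot e\mathcal{A}e^{\perp}=\{0\}\ \Rightarrow\ exe=0 \qquad\text{and}\qquad e\mathcal{A}e^{\perp}\cdot e^{\perp}xe^{\perp}=\{0\}\ \Rightarrow\ e^{\perp}xe^{\perp}=0.
\]
Under the identifications above these are precisely the statements $aM=\{0\}\Rightarrow a=0$ for $a\in A$ and $Mb=\{0\}\Rightarrow b=0$ for $b\in B$, that is, the left- and right-faithfulness of the bimodule $M$, which are part of the definition of a triangular algebra. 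Hence (\ref{Lie0_2}) is satisfied, and Proposition \ref{Prop_e_idem} yields $\operatorname*{JCent}\left(\mathcal{A}\right)=\operatorname*{Cent}\left(\mathcal{A}\right)$.

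I do not expect any genuine obstacle here: the corollary is essentially immediate once Proposition \ref{Prop_e_idem} is in hand. The only point requiring care is matching the faithfulness conventions in the definition of a triangular algebra to the two implications of (\ref{Lie0_2}), together with the observation that the vanishing of the lower-left Peirce corner $e^{\perp}\mathcal{A}e$ renders one half of each implication vacuous.
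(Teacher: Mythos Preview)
Your proposal is correct and follows exactly the paper's route: the paper simply observes (in the paragraph preceding Proposition~\ref{Prop_e_idem}) that triangular algebras satisfy $e^{\perp}\mathcal{A}e=\{0\}$ and hence condition~(\ref{Lie0_2}), and then states the corollary as an immediate consequence of Proposition~\ref{Prop_e_idem}. You have merely made explicit the verification of~(\ref{Lie0_2}) via the bimodule faithfulness built into the definition of a triangular algebra, which the paper leaves implicit.
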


Since every upper triangular matrix algebra $\mathcal{A}=T_{n}\left(  A\right)  $, $n\geq2$, where $A$ is a unital algebra, can be represented as a triangular algebra,
\[
\mathcal{A}=\left(
\begin{array}
[c]{cc}%
A & M_{1\times\left(  n-1\right)  }\left(  A\right) \\
& T_{n-1}\left(  A\right)
\end{array}
\right) ,
\]
by Corollary \ref{Prop_Tri} there do not exist proper Jordan centralizers of $T_{n}\left(  A\right)$. 
%

Another class of triangular algebras are nest algebras $\mathcal{T}\left(  \mathcal{N}\right)$ associated to a nest $\mathcal{N} \neq\left\{  0,H\right\}  $ (see \cite[Proposition 5]{Ch}). Let us recall, a \emph{nest} is a chain $\mathcal{N}$ of closed subspaces of a complex Hilbert space $H$ containing $\left\{  0\right\}  $ and $H$ which is closed under arbitrary intersections and closed linear span. The \emph{nest algebra} associated to $\mathcal{N}$ is the algebra
\[
\mathcal{T}\left(  \mathcal{N}\right)  =\left\{  T\in\mathcal{B}\left(
H\right)  \mid T\left(  N\right)  \subseteq N\text{ for all }N\in
\mathcal{N}\right\}  .
\]
If $\mathcal{N}=\left\{  0,H\right\}  $, then $\mathcal{T}\left( \mathcal{N}\right)  =\mathcal{B}\left(  H\right)  $ is an algebra of all bounded linear operators on $H$ and it is a prime algebra. Let us mention that finite dimensional nest algebras are isomorphic to a complex block upper triangular matrix algebras.
By Proposition \ref{Prop_Pra} and Corollary \ref{Prop_Tri} the following corollary holds.

\begin{corollary}
Let $\mathcal{N}$ be nest of a complex Hilbert space $H$, $\dim H\geq2$ and
let $A=\mathcal{T}\left(  \mathcal{N}\right)  $ be a nest algebra. Then
$\operatorname*{JCent}\left(  \mathcal{A}\right)  =\operatorname*{Cent}\left(
\mathcal{A}\right)  .$
\end{corollary}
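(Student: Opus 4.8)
The plan is to prove the equality by splitting into the two structural cases recalled immediately before the statement, according to whether the nest is trivial. If $\mathcal{N}=\{0,H\}$, then $\mathcal{A}=\mathcal{T}(\mathcal{N})=\mathcal{B}(H)$ is prime, and hence semiprime. If $\mathcal{N}\neq\{0,H\}$, then $\mathcal{A}$ is a triangular algebra by \cite[Proposition 5]{Ch}. In each case a result already established in the paper applies directly, so no new computation with the double-commutator set $Z_{J}(\mathcal{A})$ is needed.

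In the trivial-nest case I would invoke Proposition \ref{Prop_Pra}, which gives $\operatorname*{QJCent}(\mathcal{A})=\operatorname*{Cent}(\mathcal{A})$ for every semiprime unital algebra. Combining this with the inclusions $\operatorname*{Cent}(\mathcal{A})\subseteq\operatorname*{JCent}(\mathcal{A})\subseteq\operatorname*{QJCent}(\mathcal{A})$ noted in the introduction, the sandwiched set $\operatorname*{JCent}(\mathcal{A})$ is squeezed to equal $\operatorname*{Cent}(\mathcal{A})$. The only fact used here is that $\mathcal{B}(H)$ is semiprime, which is immediate from its primeness.

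In the nontrivial-nest case, once $\mathcal{A}$ is recognized as a triangular algebra, Corollary \ref{Prop_Tri} applies verbatim and yields $\operatorname*{JCent}(\mathcal{A})=\operatorname*{Cent}(\mathcal{A})$. Concretely, to exhibit the triangular structure one chooses $N\in\mathcal{N}$ with $0\neq N\neq H$ and takes $e$ to be the orthogonal projection onto $N$; since $\mathcal{N}$ is a chain, $e$ lies in $\mathcal{A}$ and is a nontrivial idempotent, and the invariance condition $T(N)\subseteq N$ satisfied by every $T\in\mathcal{A}$ rewrites as $e^{\perp}\mathcal{A}e=\{0\}$, which is exactly the defining relation of a triangular algebra appealed to in Corollary \ref{Prop_Tri}.

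The step I expect to need the most care is precisely this identification of $\mathcal{T}(\mathcal{N})$ with a triangular algebra; but since it is the content of \cite[Proposition 5]{Ch}, it can be cited rather than reproved, so that the whole argument collapses to a dichotomy plus two appeals to earlier results. I would also remark that the hypothesis $\dim H\geq2$ serves only to keep the algebra from degenerating to $\mathbb{C}$, for which the conclusion holds trivially in any case.
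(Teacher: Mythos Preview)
Your proof is correct and matches the paper's approach exactly: the paper likewise splits into the cases $\mathcal{N}=\{0,H\}$ (invoking Proposition~\ref{Prop_Pra} for the prime algebra $\mathcal{B}(H)$) and $\mathcal{N}\neq\{0,H\}$ (invoking Corollary~\ref{Prop_Tri} for the triangular algebra $\mathcal{T}(\mathcal{N})$, via \cite[Proposition~5]{Ch}). Your additional remarks on the projection $e$ and on the inclusion chain $\operatorname*{Cent}(\mathcal{A})\subseteq\operatorname*{JCent}(\mathcal{A})\subseteq\operatorname*{QJCent}(\mathcal{A})$ are sound but go slightly beyond what the paper spells out.
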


\subsection{Examples of algebras satisfying  $\operatorname*{QJCent}\left( \mathcal{A}\right)  =\operatorname*{JCent}\left(  \mathcal{A}\right)  $}$\ $

An algebra $\mathcal{A}$ satisfying $\operatorname*{QJCent} \left(  \mathcal{A}\right)  =\operatorname*{JCent}\left(  \mathcal{A}\right) \neq\operatorname*{Cent}\left(  \mathcal{A}\right)  $ can be constructed as follows. Let $\mathcal{A}$ be a noncommutative algebra that satisfies polynomial identity $\left[  \left[ X,Y\right]  ,Z\right]  $. Then for every $a\in\mathcal{A}$ we have $\left[  \left[  a,\mathcal{A}\right]  ,\mathcal{A}\right]  =\left\{
0\right\}  $ and $\left[  a,\left[  \mathcal{A},\mathcal{A}\right]
\right]  =\left\{  0\right\}  $. Therefore $Z\left(  \mathcal{A}\right)
\neq\mathcal{A}=Z_{J}\left(  \mathcal{A}\right)  =Z_{Q}\left(  \mathcal{A}%
\right)  $ holds and every map $f:\mathcal{A}\rightarrow\mathcal{A}$ defined as $f\left(  x\right)  =a\circ x$, where $a\notin Z\left( \mathcal{A}\right)  $, is a proper Jordan centralizer.

An example of algebra satisfying the conditions from previous paragraph is a Grassmann algebra. Namely, a Grassmann algebra $\mathcal{A}$ is a $\mathbb{Z}_{2}$-graded algebra of the form $\mathcal{A}=\mathcal{A}_{0}+\mathcal{A}_{1}$ where $\mathcal{A} _{0}=Z\left(  \mathcal{A}\right)  $ and $\left[  \mathcal{A},\mathcal{A} \right]  =\left[  \mathcal{A}_{1},\mathcal{A}_{1}\right]  \subseteq \mathcal{A}_{0}=Z\left(  \mathcal{A}\right)  $. An example of such algebra is%
\[
\mathcal{A}=\left\{
\begin{bmatrix}
r & s & u\\
& r & t\\
&  & r
\end{bmatrix}
;r,s,t,u\in A\right\}  \subseteq T_{3}\left(  A\right)  ,
\]
where $A$ is a commutative unital algebra. Note that 
\[
\mathcal{A}_{0}=Z\left(  \mathcal{A}\right)  =\left\{
\begin{bmatrix}
r & 0 & u\\
& r & 0\\
&  & r
\end{bmatrix}
;r,u\in A\right\}  ,\;\mathcal{A}_{1}=\left\{
\begin{bmatrix}
0 & s & 0\\
& 0 & t\\
&  & 0
\end{bmatrix}
;s,t\in A\right\}  .
\]

\subsection{An algebra satisfying  $\operatorname*{Cent}\left(  \mathcal{A}\right)
\neq\operatorname*{JCent}\left(  \mathcal{A}\right)  \neq
\operatorname*{QJCent}\left(  \mathcal{A}\right)  \label{Primer0}$} $\ $

As the final example of this section we present an algebra $\mathcal{A}$ satisfying the chain of strong inclusions $Z\left(  \mathcal{A}\right)  \subset Z_{J}\left(  \mathcal{A}\right)  \subset
Z_{Q}\left(  \mathcal{A}\right)  .$ Note that this algebra will be used in the construction of a counterexample presented in the next section. This algebra, given already in \cite{YB}, is subalgebra of the matrix algebra $M_{4}\left(  A\right)  $, where $A$ is a commutative unital algebra. Let 
\begin{equation}
\mathcal{A}=\left\{
\begin{bmatrix}
r & s & t_{1} & u\\
& r & v & t_{2}\\
&  & r & s\\
&  &  & r
\end{bmatrix}
;\ r,s,t_{i},u,v\in A\right\}  \subseteq M_{4}\left(  A\right)  .\label{Primer}%
\end{equation}
Direct calculations show 
\begin{align*}
Z\left(  \mathcal{A}\right)   &  =\left\{  r1+t\left(  e_{13}+e_{24}\right)
+ue_{14}|r,t,u\in A\right\}  ,\\
Z_{J}\left(  \mathcal{A}\right)   &  =\left\{  r1+t_{1}e_{13}+t_{2}%
e_{24}+ue_{14}|r,t_{i},u\in A\right\}  ,\\
Z_{Q}\left(  \mathcal{A}\right)   &  =\left\{  r1+t_{1}e_{13}+t_{2}%
e_{24}+ue_{14}+ve_{23}|r,t_{i},u,v\in A\right\}  ,
\end{align*}
noting that $e_{13}$ is a noncentral element in $Z_{J}\left(  \mathcal{A}\right)  $ and that  $e_{23}\in Z_{Q}\left( \mathcal{A}\right) \setminus Z_{J}\left(  \mathcal{A}\right)  $. It follows that the map $f_{1}\left(  x\right)  =e_{13}\circ x$ is a proper Jordan centralizer and the map $f_{2}\left(  x\right)  =e_{23}\circ x$ is a proper quasi Jordan centralizer.

\section{Characterization of maps in $\operatorname*{QJDer}\left( \mathcal{A}\right)  $}

Recalling the definitions of (quasi) Jordan derivations and Jordan centralizers it is not hard to see that for every unital algebra $\mathcal A$ the inclusion $\operatorname*{JCent}\left(  \mathcal{A}\right)  +\operatorname*{JDer}\left( \mathcal{A}\right)  \subseteq\operatorname*{QJDer}\left(  \mathcal{A}\right)$ holds. Here we will be interested in unital algebras over a field $F$, $\operatorname*{char} (F)\neq2$,  satisfying $\operatorname*{QJDer}\left(  \mathcal{A}\right)  =\operatorname*{JCent} \left(  \mathcal{A}\right)  +\operatorname*{JDer}\left(  \mathcal{A}\right) $. Using the observations from the previous section some examples of algebras satisfying $\operatorname*{QJDer}\left(  \mathcal{A}\right)  =\operatorname*{Cent} \left(  \mathcal{A}\right)  +\operatorname*{Der}\left(  \mathcal{A}\right) $ will also be given. 


First we note that $\operatorname*{JCent}\left(  \mathcal{A}\right)  \cap \operatorname*{JDer}\left(  \mathcal{A}\right)  =\left\{  0\right\}  $. This holds true since every $f\in\operatorname*{JCent}(\mathcal{A})\cap \operatorname*{JDer}\left(  \mathcal{A}\right)  $ satisfies
\[
f\left(  x\right)  \circ y+x\circ f\left(  y\right)  =f\left(  x\circ
y\right)  =f\left(  x\right)  \circ y
\]
for all $x,y\in\mathcal{A}$. Therefore $x\circ f\left(  y\right)  =0$ and, for $x=1$,  $f=0$ follows.

Now let $f\in\operatorname*{QJDer}\left(  \mathcal{A}\right)  $ and let us denote $f\left(  1\right)  =2\alpha$. Set $d\left(  x\right) =f\left(  x\right)  -\alpha\circ x$ for all $x\in\mathcal{A}$. Then
$f\left(  x\right)  =\alpha\circ x+d\left(  x\right)  $ and $f\in
\operatorname*{JCent}\left(  \mathcal{A}\right)  +\operatorname*{JDer}\left(
\mathcal{A}\right)  $ if and only if the map $\bar{f}\left( x\right)  =\alpha\circ x$ is a Jordan centralizer and $d$ is a Jordan derivation. Of course we can not expect that $\operatorname*{QJDer}\left(  \mathcal{A}\right)  =\operatorname*{JCent} \left(  \mathcal{A}\right)  +\operatorname*{JDer}\left(  \mathcal{A}\right)  $
holds in general. Let us give an example of algebra for which
$\operatorname*{QJDer}\left(  \mathcal{A}\right)  \neq\operatorname*{JCent}%
\left(  \mathcal{A}\right)  +\operatorname*{JDer}\left(  \mathcal{A}\right)
$.\bigskip

\noindent\textbf{Example.} Let $\mathcal{A}$ be the algebra defined in (\ref{Primer}). Let the maps $f,h:\mathcal{A}\rightarrow\mathcal{A}$ be given as
\[%
\begin{bmatrix}
r & s & t_{1} & u\\
& r & v & t_{2}\\
&  & r & s\\
&  &  & r
\end{bmatrix}
\overset{f}{\mapsto}%
\begin{bmatrix}
0 & 0 & 0 & 0\\
& 0 & 2r & 0\\
&  & 0 & 0\\
&  &  & 0
\end{bmatrix}
\;\text{and}\;%
\begin{bmatrix}
r & s & t_{1} & u\\
& r & v & t_{2}\\
&  & r & s\\
&  &  & r
\end{bmatrix}
\overset{h}{\mapsto}%
\begin{bmatrix}
0 & 0 & s & 0\\
& 0 & 4r & s\\
&  & 0 & 0\\
&  &  & 0
\end{bmatrix}
.
\]
Calculations show that $f\left(  x\right)  \circ y+x\circ f\left(  y\right)  =h\left(  x\circ y\right)  $ for all $x,y\in\mathcal{A}$. Therefore $f$ is a quasi Jordan derivation. Since $f\left( 1\right)  =2e_{23}$ we can  write $f\left(  x\right) =e_{23}\circ x+d\left(  x\right)  $. Thus, the map $\bar{f}\left(  x\right)
=e_{23}\circ x$ is not a Jordan centralizer as seen in subsection \ref{Primer0}. Also, the map $d\left(  x\right)  =f\left( x\right)  -e_{23}\circ x=-s\left(  e_{13}+e_{24}\right)  $ is not a Jordan derivation. Indeed, since $x\circ x=0$ holds for $x=e_{12}+e_{34}$ it follows that $0= d\left(  x\circ x\right)  \neq d\left(  x\right)  \circ x+x\circ d\left(  x\right)  =-4e_{14}$, which is a contradiction. Therefore $f\notin\operatorname*{JCent}\left(  \mathcal{A}\right)
+\operatorname*{JDer}\left(  \mathcal{A}\right)  $.\bigskip

The most we can say in general is the following criterion.

\begin{lemma}
\label{Lem} Let $f\in\operatorname*{QJDer}\left(  \mathcal{A}\right)  $.
Then $f\in\operatorname*{JCent}\left(  \mathcal{A}\right) +\operatorname*{JDer}\left(  \mathcal{A}\right)  $ if and only if $f\left(  1\right)  \in Z_{J}\left(  A\right)  $.
\end{lemma}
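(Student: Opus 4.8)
The plan is to lean on the reduction recorded immediately before the lemma: writing $f(1) = 2\alpha$, $\bar{f}(x) = \alpha \circ x$ and $d = f - \bar{f}$, one has $f \in \operatorname*{JCent}(\mathcal{A}) + \operatorname*{JDer}(\mathcal{A})$ exactly when $\bar{f}$ is a Jordan centralizer and $d$ is a Jordan derivation. By Theorem \ref{Theor2}$(b)$, the statement that $\bar{f}$ is a Jordan centralizer is equivalent to $\alpha \in Z_J(\mathcal{A})$, and since $Z_J(\mathcal{A})$ is an $F$-subspace and $\operatorname*{char} F \neq 2$, this is in turn the same as $f(1) = 2\alpha \in Z_J(\mathcal{A})$. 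Thus the lemma will follow once I show that, for $f \in \operatorname*{QJDer}(\mathcal{A})$, the condition $\alpha \in Z_J(\mathcal{A})$ alone already forces $d$ to be a Jordan derivation, with the ``Jordan centralizer half'' handled entirely by Theorem \ref{Theor2}$(b)$.

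For the forward implication this is immediate: if $f \in \operatorname*{JCent}(\mathcal{A}) + \operatorname*{JDer}(\mathcal{A})$, then the reduction makes $\bar{f}$ a Jordan centralizer, whence $\alpha \in Z_J(\mathcal{A})$ by Theorem \ref{Theor2}$(b)$ and so $f(1) \in Z_J(\mathcal{A})$. The substance lies in the converse. Assuming $f(1) \in Z_J(\mathcal{A})$, the map $\bar{f}$ is already a Jordan centralizer, so it remains only to verify that $d \in \operatorname*{JDer}(\mathcal{A})$. First I would identify the auxiliary map $h$ attached to $f \in \operatorname*{QJDer}(\mathcal{A})$ by setting $y = 1$ in $f(x) \circ y + x \circ f(y) = h(x \circ y)$; using $f(1) = 2\alpha$ and $\operatorname*{char} F \neq 2$ this yields $h(x) = f(x) + \bar{f}(x)$. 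Feeding this back into the defining identity, replacing $f$ by $\bar{f} + d$ throughout, and applying the Jordan centralizer identities $\bar{f}(x) \circ y = \bar{f}(x \circ y) = x \circ \bar{f}(y)$ makes all $\bar{f}$-contributions cancel, leaving precisely $d(x \circ y) = d(x) \circ y + x \circ d(y)$. Hence $d$ is a Jordan derivation and $f = \bar{f} + d \in \operatorname*{JCent}(\mathcal{A}) + \operatorname*{JDer}(\mathcal{A})$.

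The one point needing care, and the step I expect to be the main obstacle in writing out cleanly, is this final cancellation. It relies on using not only $\bar{f}(x \circ y) = \bar{f}(x) \circ y$ but also, via commutativity of the Jordan product, $x \circ \bar{f}(y) = \bar{f}(y) \circ x = \bar{f}(y \circ x) = \bar{f}(x \circ y)$, so that the two mixed $\bar{f}$-terms on the right combine to $2\bar{f}(x \circ y)$ and exactly offset the two $\bar{f}$-terms on the left produced by $h = f + \bar{f}$. Apart from this bookkeeping the argument is purely formal, invoking nothing about $\mathcal{A}$ beyond unitality and $\operatorname*{char} F \neq 2$.
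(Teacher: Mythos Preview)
Your proof is correct and follows essentially the same route as the paper's: write $f(1)=2\alpha$, determine $h(x)=f(x)+\alpha\circ x$ by setting $y=1$, and substitute $f=\bar f+d$ into the quasi Jordan derivation identity. The only cosmetic difference is that the paper first computes the obstruction $d(x\circ y)-d(x)\circ y-x\circ d(y)=[[\alpha,x],y]+[[\alpha,y],x]$ for arbitrary $f\in\operatorname*{QJDer}(\mathcal{A})$ and then invokes $\alpha\in Z_J(\mathcal{A})$ to kill it, whereas you invoke $\alpha\in Z_J(\mathcal{A})$ first (via Theorem~\ref{Theor2}$(b)$) and use the resulting Jordan-centralizer identities $\bar f(x)\circ y=\bar f(x\circ y)=x\circ\bar f(y)$ to achieve the same cancellation.
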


\begin{proof}
Clearly, if $f=\bar{f}+d\in\operatorname*{JCent}\left(  \mathcal{A}\right) +\operatorname*{JDer}\left(  \mathcal{A}\right)  $, then $f\left( 1\right)  =\bar{f}\left(  1\right)  $ and, since $\bar{f}$ is a Jordan centralizer, we have $f\left(  1\right)  \in Z_{J}\left(  A\right)  $.

For the proof of the other implication let $f\in\operatorname*{QJDer}\left(  \mathcal{A}\right)  $. Then there exist $h:\mathcal{A}\rightarrow\mathcal{A}$ such that
\begin{equation}
f\left(  x\right)  \circ y+x\circ f\left(  y\right)  =h\left(  x\circ
y\right)  \label{e_3}%
\end{equation}
for all $x,y\in\mathcal{A}$. Denote $f\left(  1\right)  =2\alpha$. Substituting $y=1$ in (\ref{e_3}) we get $2f\left(  x\right)  +x\circ f\left(  1\right)  =2h\left(  x\right)  $ or equivalently $f\left(  x\right)
+\alpha\circ x=h\left(  x\right)  $ for all $x\in\mathcal{A}$. Therefore
\begin{equation}
f\left(  x\right)  \circ y+x\circ f\left(  y\right)  =f\left(  x\circ y\right)  +\alpha\circ\left(  x\circ y\right)  \label{e_4}
\end{equation}
for all $x,y\in\mathcal{A}$. Let us write $f\left(  x\right)  =\alpha\circ x+d\left(  x\right)  $, where $d:\mathcal{A}\rightarrow\mathcal{A}$. By (\ref{e_4}) we have
\[
\left(  \alpha\circ x\right)  \circ y+d\left(  x\right)  \circ y+x\circ\left(
\alpha\circ y\right)  +x\circ d\left(  y\right)  =d\left(  x\circ y\right)
+2\left(  \alpha\circ\left(  x\circ y\right)  \right)  ,
\]
which can be written as
\begin{align*}
d\left(  x\circ y\right)  -d\left(  x\right)  \circ y-x\circ d\left(
y\right)   &  =\left(  \alpha\circ x\right)  \circ y+\left(  \alpha\circ
y\right)  \circ x-2\left(  \alpha\circ\left(  x\circ y\right)  \right) \\
&  =\left[  \left[  \alpha,x\right]  ,y\right]  +\left[  \left[
\alpha,y\right]  ,x\right]
\end{align*}
for all $x,y\in\mathcal{A}$. Now, if $f\left(  1\right)  =2\alpha\in
Z_{J}\left(  \mathcal{A}\right)  $, then $\left[  \left[  \alpha
,\mathcal{A}\right]  ,\mathcal{A}\right]  =\left\{  0\right\}  $ and it follows that $\bar
{f}\left(  x\right)  =\alpha\circ x$ is a Jordan centralizer and $d$ is a Jordan derivation. Therefore  $f=\bar{f}+d\in\operatorname*{JCent}\left( \mathcal{A}\right)  +\operatorname*{JDer}\left(  \mathcal{A}\right)  $.
\end{proof}

Thus, to know whether $\operatorname*{QJDer}\left(  \mathcal{A}\right)=\operatorname*{JCent}\left( \mathcal{A}\right)  +\operatorname*{JDer}\left(  \mathcal{A}\right)  $ holds for an algebra $\mathcal A$ we have to check if every $f\in\operatorname*{QJDer}\left(  \mathcal{A}\right)  $ satisfies $f\left( 1\right)  \in Z_{J}\left( \mathcal{A}\right)  $ and identity (\ref{e_4}) can be used to verify this. In case of matrix algebras, algebras generated by idempotents, algebras containing a nontrivial idempotent $e$ satisfying (\ref{Lie0_2}) and semiprime algebras it can be shown that $f\left(  1\right)  \in Z\left(  \mathcal{A}\right)  $. Since on some algebras every Jordan derivation is a derivation, some refined statements can be made and the following results are obtained. 

\begin{proposition}
\label{Prop_idemQ} Let $\mathcal{A}=R\left(  \mathcal{A}\right)  $. Then $\operatorname*{QJDer}\left(  \mathcal{A}\right)  =\operatorname*{Cent}\left( \mathcal{A}\right)  +\operatorname*{JDer}\left(  \mathcal{A}\right)  $.
\end{proposition}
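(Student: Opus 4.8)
The plan is to prove the two inclusions separately. The inclusion $\operatorname*{Cent}\left(\mathcal{A}\right) + \operatorname*{JDer}\left(\mathcal{A}\right) \subseteq \operatorname*{QJDer}\left(\mathcal{A}\right)$ is immediate from the chains of inclusions recorded in the introduction: since $\operatorname*{Cent}\left(\mathcal{A}\right) \subseteq \operatorname*{JCent}\left(\mathcal{A}\right)$ and $\operatorname*{JCent}\left(\mathcal{A}\right) + \operatorname*{JDer}\left(\mathcal{A}\right) \subseteq \operatorname*{QJDer}\left(\mathcal{A}\right)$, nothing needs to be checked. The substance lies in the reverse inclusion.

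For the reverse inclusion I take $f \in \operatorname*{QJDer}\left(\mathcal{A}\right)$ and set $f\left(1\right) = 2\alpha$. By Lemma \ref{Lem} it would suffice to show $\alpha \in Z_{J}\left(\mathcal{A}\right)$, but I aim for the stronger conclusion $\alpha \in Z\left(\mathcal{A}\right)$. Combined with Proposition \ref{Prop_idem}, which gives $\operatorname*{JCent}\left(\mathcal{A}\right) = \operatorname*{Cent}\left(\mathcal{A}\right)$ for algebras generated by idempotents, this upgrades the conclusion of Lemma \ref{Lem} from $f \in \operatorname*{JCent}\left(\mathcal{A}\right) + \operatorname*{JDer}\left(\mathcal{A}\right)$ to $f \in \operatorname*{Cent}\left(\mathcal{A}\right) + \operatorname*{JDer}\left(\mathcal{A}\right)$, as required. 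Because $\mathcal{A} = R\left(\mathcal{A}\right)$, establishing $\alpha \in Z\left(\mathcal{A}\right)$ reduces to proving $\left[\alpha, e\right] = 0$ for every idempotent $e \in \mathcal{A}$.

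To prove $\left[\alpha, e\right] = 0$, I would write $d\left(x\right) = f\left(x\right) - \alpha \circ x$ and invoke the identity obtained in the proof of Lemma \ref{Lem} by transforming (\ref{e_4}), namely $d\left(x\circ y\right) - d\left(x\right)\circ y - x\circ d\left(y\right) = \left[\left[\alpha,x\right],y\right] + \left[\left[\alpha,y\right],x\right]$ for all $x,y \in \mathcal{A}$. Substituting $x = y = e$ and using $e\circ e = 2e$ collapses this to the single relation $d\left(e\right) - d\left(e\right)\circ e = \left[\left[\alpha,e\right],e\right]$. The decisive step is to read off the Peirce components of each side relative to $e$ (with $e^{\perp} = 1 - e$): the left-hand side equals $e^{\perp}d\left(e\right)e^{\perp} - e\,d\left(e\right)e$ and hence lies in the diagonal part $e\mathcal{A}e + e^{\perp}\mathcal{A}e^{\perp}$, while a short computation gives $\left[\left[\alpha,e\right],e\right] = e\alpha e^{\perp} + e^{\perp}\alpha e$, which lies in the off-diagonal part $e\mathcal{A}e^{\perp} + e^{\perp}\mathcal{A}e$. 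Since the Peirce decomposition is a direct sum, both sides vanish; in particular $e\alpha e^{\perp} = e^{\perp}\alpha e = 0$, which is exactly $\left[\alpha, e\right] = 0$.

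The step I expect to be the main obstacle is eliminating the auxiliary map $d$, since the relation above still carries the unknown $d\left(e\right)$. The resolution is precisely the diagonal/off-diagonal splitting: $d\left(e\right)$ contributes only to the diagonal Peirce components whereas $\left[\left[\alpha,e\right],e\right]$ contributes only to the off-diagonal ones, so the off-diagonal part must die independently of $d$. Once $\left[\alpha, e\right] = 0$ holds for every idempotent, generation by idempotents yields $\alpha \in Z\left(\mathcal{A}\right)$, and the argument concludes as described above, giving $f \in \operatorname*{Cent}\left(\mathcal{A}\right) + \operatorname*{JDer}\left(\mathcal{A}\right)$.
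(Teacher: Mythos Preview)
Your argument is correct and follows essentially the same route as the paper: both substitute $x=y=e$ into the quasi Jordan derivation identity, extract the off-diagonal Peirce components to obtain $e\alpha e^{\perp}=e^{\perp}\alpha e=0$, conclude $\alpha\in Z(\mathcal{A})$ since $\mathcal{A}$ is generated by idempotents, and finish via Lemma~\ref{Lem}. The only cosmetic difference is that the paper works directly with $f$ and equation~(\ref{e_4}) and multiplies by $e$ and $e^{\perp}$, whereas you first pass to $d=f-\alpha\circ(\cdot)$ and phrase the Peirce argument as a diagonal/off-diagonal splitting; the computations are equivalent.
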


\begin{proof}
Let $f\in\operatorname*{QJDer}\left(  \mathcal{A}\right)  $, $f\left( 1\right)  =2\alpha$, and let $e$ be an idempotent of algebra $\mathcal{A}$. Set $e^{\perp}=1-e$. Substituting $y=x=e$ in (\ref{e_4}) we get $2f\left(  e\right)  e+2ef\left(  e\right)     =2f\left(  e\right)  +2\left(
\alpha\circ e\right)$ or equivalently
\begin{align*}
f\left(  e\right)  e+ef\left(  e\right)   &  =f\left(  e\right)  +\alpha e+ea.
\end{align*}
Multiplying this equality from the left with $e$ and from the right with $e^{\perp}$ we further get
\[
ef\left(  e\right)  e^{\perp}=ef\left(  e\right)  e^{\perp}+e\alpha e^{\perp
}.
\]
It follows that $e\alpha e^{\perp}=0$ and similarly it can be proven that  $e^{\perp }\alpha e=0$.  Therefore $\alpha=e\alpha e+e^{\perp}\alpha e^{\perp}$ and, noticing that $\alpha e=e\alpha e=e\alpha$ holds, we have derived that $\alpha$ commutes with idempotent $e$. By assumption $\mathcal{A}=R\left(  \mathcal{A}\right)  $, so  $\alpha\in Z\left( \mathcal{A}\right)  $. The conclusion of the proposition follows by Lemma \ref{Lem}.
\end{proof}

\begin{corollary}
\label{Cor2} Let $A$ be a unital algebra and $\mathcal{A}=M_{n}\left(  A\right) $, $n\geq2$, the matrix algebra. Then $\operatorname*{QJDer}\left( \mathcal{A}\right)  =\operatorname*{Cent}\left(  \mathcal{A}\right) +\operatorname*{Der}\left(  \mathcal{A}\right)  $.
\end{corollary}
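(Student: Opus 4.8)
The plan is to deduce this from Proposition~\ref{Prop_idemQ} together with the classical fact that every Jordan derivation of a full matrix algebra is a derivation. The first task is to verify that $\mathcal{A}=M_{n}(A)$ is generated by its idempotents, i.e.\ that $\mathcal{A}=R(\mathcal{A})$, so that Proposition~\ref{Prop_idemQ} applies. Besides the diagonal matrix units $e_{ii}$, the algebra contains, for every $i\neq j$ and every $a\in A$, the idempotent $e_{ii}+ae_{ij}$; indeed a one-line check gives $(e_{ii}+ae_{ij})^{2}=e_{ii}+ae_{ij}$, using $e_{ij}e_{ii}=0$ and $e_{ij}e_{ij}=0$. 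Hence $ae_{ij}=(e_{ii}+ae_{ij})-e_{ii}\in R(\mathcal{A})$ for all $i\neq j$, and, since $n\geq2$, choosing any $j\neq i$ we also obtain $ae_{ii}=(ae_{ij})e_{ji}\in R(\mathcal{A})$, where $e_{ji}=(e_{jj}+e_{ji})-e_{jj}$ is again a difference of idempotents. As the elements $ae_{ij}$ $(1\leq i,j\leq n,\ a\in A)$ additively generate $\mathcal{A}$ and $R(\mathcal{A})$ is a subalgebra, this proves $\mathcal{A}=R(\mathcal{A})$.

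With the generation established, Proposition~\ref{Prop_idemQ} yields directly $\operatorname*{QJDer}(\mathcal{A})=\operatorname*{Cent}(\mathcal{A})+\operatorname*{JDer}(\mathcal{A})$. In particular, its proof shows that for $f\in\operatorname*{QJDer}(\mathcal{A})$ with $f(1)=2\alpha$ the element $\alpha$ commutes with every idempotent, and hence, by the generation just proved, lies in $Z(\mathcal{A})$. This is exactly the statement $f(1)\in Z(\mathcal{A})$ announced before Proposition~\ref{Prop_idemQ}, and it is what upgrades the Jordan centralizer part supplied by Lemma~\ref{Lem} to an honest centralizer $\bar f(x)=\alpha\circ x=f(1)x$.

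It then remains to replace $\operatorname*{JDer}(\mathcal{A})$ by $\operatorname*{Der}(\mathcal{A})$. For this I would appeal to the classical theorem that, for $n\geq2$ and $\operatorname*{char}(F)\neq2$, every Jordan derivation of the full matrix algebra $M_{n}(A)$ over a unital algebra $A$ is a derivation, so that $\operatorname*{JDer}(\mathcal{A})=\operatorname*{Der}(\mathcal{A})$ (a matrix-algebra instance of the Herstein-type phenomenon discussed in the introduction). Combining this equality with the previous paragraph gives $\operatorname*{QJDer}(\mathcal{A})=\operatorname*{Cent}(\mathcal{A})+\operatorname*{Der}(\mathcal{A})$, as required. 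The single genuinely external ingredient, and hence the main obstacle were one to insist on a self-contained argument, is precisely this equality $\operatorname*{JDer}(M_{n}(A))=\operatorname*{Der}(M_{n}(A))$; everything else reduces to the routine idempotent identities above and a direct application of results already proved in the paper.
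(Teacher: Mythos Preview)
Your argument is correct and follows exactly the paper's route: verify $M_n(A)=R(M_n(A))$, apply Proposition~\ref{Prop_idemQ}, and then invoke the classical fact (attributed in the paper to Jacobson--Rickart \cite[Theorems~7 and~22]{JR}) that every Jordan derivation of $M_n(A)$ is a derivation. The only difference is that you spell out the idempotent generation explicitly, whereas the paper simply asserts it.
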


\begin{proof}
Every matrix algebra over a unital algebra is generated by idempotents. Therefore the statement of Proposition \ref{Prop_idemQ} holds for $M_{n}\left(  A\right)$. Further, from classical results of Jacobson and Rickart \cite[Theorems 7 and 22]{JR} it follows that every Jordan derivation of $M_{n}\left(  A\right)  $ is a derivation. 
\end{proof}

\begin{proposition}
Let $\mathcal{A}$ be a unital algebra with a nontrivial idempotent $e$ satisfying (\ref{Lie0_2}). Then $\operatorname*{QJDer}\left( \mathcal{A}\right)  =\operatorname*{Cent}\left(  \mathcal{A}\right) +\operatorname*{JDer}\left(  \mathcal{A}\right)  $.
\end{proposition}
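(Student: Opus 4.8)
The plan is to reduce the statement to the single claim that $f(1)\in Z(\mathcal{A})$ for every $f\in\operatorname*{QJDer}(\mathcal{A})$, and then feed this into the machinery already established. Since the nontrivial idempotent $e$ satisfies (\ref{Lie0_2}), Proposition \ref{Prop_e_idem} (and its proof) gives $Z_{J}(\mathcal{A})=Z(\mathcal{A})$ and hence $\operatorname*{JCent}(\mathcal{A})=\operatorname*{Cent}(\mathcal{A})$. Thus, once we know $f(1)\in Z(\mathcal{A})=Z_{J}(\mathcal{A})$, Lemma \ref{Lem} yields $f\in\operatorname*{JCent}(\mathcal{A})+\operatorname*{JDer}(\mathcal{A})=\operatorname*{Cent}(\mathcal{A})+\operatorname*{JDer}(\mathcal{A})$. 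The reverse inclusion $\operatorname*{Cent}(\mathcal{A})+\operatorname*{JDer}(\mathcal{A})\subseteq\operatorname*{QJDer}(\mathcal{A})$ is part of the general chain of inclusions recorded in the introduction, so the proposition follows.

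To prove $f(1)=2\alpha\in Z(\mathcal{A})$ I would work with identity (\ref{e_4}) and the Peirce decomposition (\ref{Lie0_1}) relative to $e$, writing $e^{\perp}=1-e$. First, substituting $x=y=e$ into (\ref{e_4}) and multiplying successively by $e$ and $e^{\perp}$, exactly as in the proof of Proposition \ref{Prop_idemQ}, forces $e\alpha e^{\perp}=0=e^{\perp}\alpha e$; hence $\alpha=\alpha_{1}+\alpha_{2}$ commutes with $e$, where $\alpha_{1}=e\alpha e$ and $\alpha_{2}=e^{\perp}\alpha e^{\perp}$.

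The heart of the argument is to show that $\alpha$ also commutes with the two off-diagonal bimodules. For this I would substitute into (\ref{e_4}) the pairs $x=e$, $y=m$ with $m\in e\mathcal{A}e^{\perp}$ (so $e\circ m=m$), and $x=a$, $y=e$ with $a\in e\mathcal{A}e$ (so $a\circ e=2a$), and then read off the individual Peirce components of each resulting identity. Although $f$ is unknown on the off-diagonal blocks, the point is that the correct choice of projections makes the unknown $f$-terms cancel in pairs, leaving only relations between $\alpha$ and the diagonal block $ef(e)e$. Evaluating the diagonal relation at the subalgebra unit $e$ pins down $ef(e)e=2\alpha_{1}$, and a symmetric substitution ($x=b\in e^{\perp}\mathcal{A}e^{\perp}$, $y=e^{\perp}$) gives $e^{\perp}f(e^{\perp})e^{\perp}=2\alpha_{2}$; together with $f(1)=2\alpha$ this determines the diagonal blocks of $f(e)$ that occur. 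Feeding these back into the off-diagonal component relations collapses them to $\alpha_{1}m=m\alpha_{2}$ for all $m\in e\mathcal{A}e^{\perp}$ and the mirror identity $\alpha_{2}n=n\alpha_{1}$ for all $n\in e^{\perp}\mathcal{A}e$. Since $\alpha m=\alpha_{1}m$, $m\alpha=m\alpha_{2}$, and likewise $\alpha n=\alpha_{2}n$, $n\alpha=n\alpha_{1}$, these say precisely that $[\alpha,e\mathcal{A}e^{\perp}]=\{0\}=[\alpha,e^{\perp}\mathcal{A}e]$.

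Finally, the diagonal blocks are handled through (\ref{Lie0_2}) exactly as in Proposition \ref{Prop_e_idem}. For $a\in e\mathcal{A}e$ one has $[\alpha,a]\in e\mathcal{A}e$, and expanding $[\alpha,am]$ and $[\alpha,na]$ for $m\in e\mathcal{A}e^{\perp}$, $n\in e^{\perp}\mathcal{A}e$ by the Leibniz rule, together with the vanishing $[\alpha,m]=0=[\alpha,n]$ just obtained, gives $[\alpha,a]\cdot e\mathcal{A}e^{\perp}=\{0\}=e^{\perp}\mathcal{A}e\cdot[\alpha,a]$; the first line of (\ref{Lie0_2}) then forces $[\alpha,a]=0$. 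The same reasoning with the second line of (\ref{Lie0_2}) gives $[\alpha,e^{\perp}\mathcal{A}e^{\perp}]=\{0\}$, and combining the four cases yields $[\alpha,\mathcal{A}]=\{0\}$, i.e. $\alpha\in Z(\mathcal{A})$. I expect the main obstacle to be the third paragraph: the Peirce bookkeeping needed to verify that the unknown $f$-components genuinely cancel, so that clean relations purely in $\alpha$ and $f(e)$ survive. Once those relations are isolated the passage through (\ref{Lie0_2}) is routine, mirroring Proposition \ref{Prop_e_idem}.
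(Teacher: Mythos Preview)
Your overall plan is correct and essentially matches the paper's: reduce to showing $f(1)=2\alpha\in Z(\mathcal{A})$, then invoke Lemma \ref{Lem} together with Proposition \ref{Prop_e_idem} (which gives $Z_{J}(\mathcal{A})=Z(\mathcal{A})$, hence $\operatorname*{JCent}(\mathcal{A})=\operatorname*{Cent}(\mathcal{A})$). The first and last steps---showing $[\alpha,e]=0$ via $x=y=e$ in (\ref{e_4}), and then deducing $[\alpha,e\mathcal{A}e]=\{0\}=[\alpha,e^{\perp}\mathcal{A}e^{\perp}]$ from (\ref{Lie0_2}) once the off-diagonal commutation is known---are identical to the paper's proof.

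Where you diverge is the ``heart'' paragraph. You propose to extract $[\alpha,e\mathcal{A}e^{\perp}]=\{0\}$ by substituting $x=e$, $y=m$ and $x=a$, $y=e$ into (\ref{e_4}), projecting onto Peirce components, determining $ef(e)e=2\alpha_{1}$ and (via $f(e)+f(e^{\perp})=2\alpha$) $e^{\perp}f(e)e^{\perp}=0$, and then reading off $\alpha_{1}m=m\alpha_{2}$. This works, and your own caveat about the bookkeeping is fair: the cancellation of the unknown $f$-terms in the $e\mathcal{A}e^{\perp}$ component does go through, but it takes several lines of Peirce manipulation. The paper avoids all of this with a single observation: for any $m\in e\mathcal{A}e^{\perp}$ (resp.\ $n\in e^{\perp}\mathcal{A}e$) the element $e+m$ (resp.\ $e+n$) is again an idempotent, so the fact---already established in Proposition \ref{Prop_idemQ}---that $\alpha$ commutes with every idempotent immediately yields $[\alpha,m]=[\alpha,e+m]-[\alpha,e]=0$. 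Your route buys nothing extra and is noticeably longer; the paper's idempotent trick is the cleaner way to cross the off-diagonal step.
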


\begin{proof}
Let $f\in\operatorname*{QJDer}\left(  \mathcal{A}\right)  $, $f\left(
1\right)  =2\alpha$. Let us use the decomposition (\ref{Lie0_1}) for $\mathcal{A}$. As seen in the proof of 
Proposition \ref{Prop_idemQ} the element $\alpha$ commutes with all idempotents of algebra $\mathcal{A}$. The equality $\left[ \alpha ,e\right]  =0$ implies $\alpha=e\alpha e+e^{\perp }\alpha e^{\perp}$. Note that $e+m$ and $e+n$ are idempotents of $\mathcal{A}$ for all $m=eme^{\perp}\in e\mathcal{A}e^{\perp}$ and $n=e^{\perp }ne\in e^{\perp}\mathcal{A}e$. Therefore we have
\[
\left[  \alpha,e\right]  =0,\text{\quad}\left[  \alpha,e+m\right]
=0,\text{\quad}\left[  \alpha,e+n\right]  =0
\]
and so $\left[  \alpha,m\right]  =0=\left[  \alpha ,n\right]  $ for all $m\in e\mathcal{A}e^{\perp}$ and $n\in e^{\perp }\mathcal{A}e$, or equivalently $\left[  \alpha, e\mathcal{A}e^{\perp }\right]  =\left\{  0\right\}  =\left[  \alpha, e^{\perp}\mathcal{A} e\right]  $. By the same arguments as in the proof of Proposition \ref{Prop_e_idem} it can be seen that this implies $\left[  \alpha, e\mathcal{A} e\right]  =\left\{  0\right\}  =\left[  \alpha, e^{\perp}\mathcal{A} e^{\perp}\right]  $ and $\alpha \in Z\left(  \mathcal{A}\right)  $. It follows by Lemma \ref{Lem} that $f\in\operatorname*{Cent}\left(  \mathcal{A}\right) +\operatorname*{JDer}\left(  \mathcal{A}\right)  $.
\end{proof}

Since every Jordan derivation of a triangular algebra is a derivation, this was proven by Zhang and Yu in \cite{ZY}, we can state the next corollary.

\begin{corollary}
\label{Prop_TriQ} Let a be triangular algebra $\mathcal{A}$. Then $\operatorname*{QJDer}\left(  \mathcal{A}\right)  =\operatorname*{Cent}\left( \mathcal{A}\right)  +\operatorname*{Der}\left(  \mathcal{A}\right)  $.
\end{corollary}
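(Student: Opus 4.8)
The plan is to obtain this corollary as a formal consequence of the immediately preceding proposition combined with the structure theorem of Zhang and Yu on Jordan derivations of triangular algebras. First I would check that a triangular algebra satisfies the hypotheses of that proposition, namely that it possesses a nontrivial idempotent $e$ fulfilling condition (\ref{Lie0_2}). Writing the triangular algebra in its standard block form
\[
\mathcal{A}=\begin{pmatrix} A & M\\ 0 & B\end{pmatrix},
\]
the element $e=\begin{pmatrix} 1_{A} & 0\\ 0 & 0\end{pmatrix}$ is a nontrivial idempotent, its complement $e^{\perp}$ projects onto the $B$-corner, and the lower-left block vanishes, so $e^{\perp}\mathcal{A}e=\{0\}$. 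As already recorded in the paragraph preceding Proposition \ref{Prop_e_idem}, this vanishing condition is exactly what makes (\ref{Lie0_2}) hold, so a triangular algebra is covered by the preceding proposition.

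Applying that proposition then gives $\operatorname*{QJDer}(\mathcal{A})=\operatorname*{Cent}(\mathcal{A})+\operatorname*{JDer}(\mathcal{A})$. The remaining task is simply to identify $\operatorname*{JDer}(\mathcal{A})$ with $\operatorname*{Der}(\mathcal{A})$. Here I would invoke the result of Zhang and Yu \cite{ZY}, which states that every Jordan derivation of a triangular algebra is a derivation, that is $\operatorname*{JDer}(\mathcal{A})=\operatorname*{Der}(\mathcal{A})$. Substituting this equality into the decomposition obtained above yields $\operatorname*{QJDer}(\mathcal{A})=\operatorname*{Cent}(\mathcal{A})+\operatorname*{Der}(\mathcal{A})$, which is the claim.

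I do not anticipate any substantive obstacle, since the statement is just a combination of two results available at this point in the paper. The only genuine checks are that the canonical idempotent of a triangular algebra meets (\ref{Lie0_2}) --- which is immediate from $e^{\perp}\mathcal{A}e=\{0\}$ --- and that the Zhang--Yu theorem applies in the present setting of a unital triangular algebra over a field of characteristic different from $2$. Both hold, so the two building blocks fit together without any further argument.
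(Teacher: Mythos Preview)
Your proposal is correct and follows exactly the paper's approach: the paper derives the corollary by noting that triangular algebras fall under the preceding proposition (as recorded earlier, they satisfy (\ref{Lie0_2}) via $e^{\perp}\mathcal{A}e=\{0\}$) and then invoking the Zhang--Yu theorem \cite{ZY} to replace $\operatorname*{JDer}(\mathcal{A})$ by $\operatorname*{Der}(\mathcal{A})$. Your write-up merely spells out these two steps in slightly more detail than the paper's one-line justification.
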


Every upper triangular matrix algebra $T_{n}\left(  A\right)  ,$ $n\geq2$, over a unital algebra $A$ is a triangular algebra.  The nest algebras $\mathcal{T}\left(  \mathcal{N}\right)  \neq\mathcal{B}\left(  H\right)  $,  $\dim H\geq2$, are triangular algebras. Therefore Corollary \ref{Prop_TriQ} holds for these algebras. It can be seen that the same conclusion holds also for algebra $\mathcal{B} \left(  H\right)  $ since it is prime. Namely, as a direct consequence of the Theorem \ref{Prop_PraQ} the following proposition can be stated.

\begin{corollary}
Let $A$ be a unital semiprime algebra. Then $\operatorname*{QJDer}\left( \mathcal{A}\right) =\operatorname*{Cent}\left(  \mathcal{A}\right) +\operatorname*{Der}\left(  \mathcal{A}\right)  $.
\end{corollary}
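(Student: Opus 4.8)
The plan is to prove the two inclusions separately, delegating the substantive direction to Theorem \ref{Prop_PraQ} and obtaining the other from the chains of inclusions recorded in the introduction.

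The key observation for the inclusion $\operatorname*{QJDer}\left(\mathcal{A}\right)\subseteq\operatorname*{Cent}\left(\mathcal{A}\right)+\operatorname*{Der}\left(\mathcal{A}\right)$ is that every quasi Jordan derivation is in particular a generalized Jordan derivation. Indeed, if $f\in\operatorname*{QJDer}\left(\mathcal{A}\right)$, so that $f\left(x\right)\circ y+x\circ f\left(y\right)=h\left(x\circ y\right)$ holds for all $x,y\in\mathcal{A}$ and some linear map $h$, then taking $g=f$ shows that (\ref{e_1}) is satisfied, whence $f\in\operatorname*{GJDer}\left(\mathcal{A}\right)$. This yields $\operatorname*{QJDer}\left(\mathcal{A}\right)\subseteq\operatorname*{GJDer}\left(\mathcal{A}\right)$. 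Since $\mathcal{A}$ is a unital semiprime algebra, Theorem \ref{Prop_PraQ} gives $\operatorname*{GJDer}\left(\mathcal{A}\right)=\operatorname*{Cent}\left(\mathcal{A}\right)+\operatorname*{Der}\left(\mathcal{A}\right)$, and combining the two facts delivers the desired inclusion.

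For the reverse inclusion I would simply invoke the chains of inclusions stated in the introduction, namely $\operatorname*{Cent}\left(\mathcal{A}\right)\subseteq\operatorname*{JCent}\left(\mathcal{A}\right)$, $\operatorname*{Der}\left(\mathcal{A}\right)\subseteq\operatorname*{JDer}\left(\mathcal{A}\right)$, and $\operatorname*{JCent}\left(\mathcal{A}\right)+\operatorname*{JDer}\left(\mathcal{A}\right)\subseteq\operatorname*{QJDer}\left(\mathcal{A}\right)$. Together these give $\operatorname*{Cent}\left(\mathcal{A}\right)+\operatorname*{Der}\left(\mathcal{A}\right)\subseteq\operatorname*{QJDer}\left(\mathcal{A}\right)$, an inclusion valid for every unital algebra and requiring no semiprimeness.

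Since the heavy lifting is entirely carried by Theorem \ref{Prop_PraQ} (which itself rests on Bre\v{s}ar's \cite[Theorem $4.3$]{Br}), there is no genuine obstacle in this argument; the only point that needs care is the identification $\operatorname*{QJDer}\left(\mathcal{A}\right)\subseteq\operatorname*{GJDer}\left(\mathcal{A}\right)$ via the substitution $g=f$, after which the stated equality is immediate.
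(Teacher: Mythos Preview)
Your proof is correct and matches the paper's approach exactly: the paper simply states that the corollary is a direct consequence of Theorem~\ref{Prop_PraQ}, and your argument spells out precisely why, using $\operatorname*{QJDer}\left(\mathcal{A}\right)\subseteq\operatorname*{GJDer}\left(\mathcal{A}\right)$ together with the elementary reverse inclusion.
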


\bigskip

\end{document}